\tikzset{
  commutative diagrams/.cd, 
  arrow style=tikz, 
  diagrams={>=stealth}
}
\DeclareMathAlphabet{\mathpzc}{OT1}{pzc}{m}{it}
\newtheorem*{rep@theorem}{\rep@title}
\newcommand{\newreptheorem}[2]{%
\newenvironment{rep#1}[1]{%
 \def\rep@title{#2 \ref*{##1}}%
 \begin{rep@theorem}}%
 {\end{rep@theorem}}}
\newtheorem{mainthm}{Theorem}
\crefname{mainthm}{Theorem}{Theorems}
\newtheorem{question}{Question}
\newtheorem{conj}[question]{Conjecture}
\newtheorem{theorem}{Theorem}[section]
\newtheorem{corollary}[theorem]{Corollary}
\newtheorem{lemma}[theorem]{Lemma}
\newtheorem{proposition}[theorem]{Proposition}
\theoremstyle{definition}
\newtheorem{definition}[theorem]{Definition}
\newtheorem{example}[theorem]{Example}
\theoremstyle{remark}
\newtheorem{rem1}[theorem]{Remark}
\newenvironment{remark}{\begin{rem1}\em}{\end{rem1}}
\title[Homological Lagrangian monodromy for some monotone tori]{Homological Lagrangian monodromy\\ for some monotone tori}
\author{Marcin Augustynowicz}
\address{Trinity College, Cambridge, CB2 1TQ, United Kingdom}
\email{\href{mailto:marcin.l.augustynowicz@gmail.com}{marcin.l.augustynowicz@gmail.com}}
\author{Jack Smith}
\address{St John's College, Cambridge, CB2 1TP, United Kingdom}
\email{\href{mailto:j.smith@dpmms.cam.ac.uk}{j.smith@dpmms.cam.ac.uk}}
\author{Jakub Wornbard}
\address{St John's College, Cambridge, CB2 1TP, United Kingdom}
\email{\href{mailto:jww43@cantab.ac.uk}{jww43@cantab.ac.uk}}
\newcommand{\HLM}{\mathcal{H}_L}
\newcommand{\HLMS}{\mathcal{SY}_L}
\renewcommand{\phi}{\varphi}
\newcommand{\Cl}{\operatorname{\mathnormal{C\kern-0.15ex \ell}}}
\newcommand{\Hess}{\operatorname{Hess}}
\newcommand{\Ham}{\operatorname{Ham}}
\newcommand{\Symp}{\operatorname{Symp}}
\newcommand{\id}{\operatorname{id}}
\newcommand{\GL}{\operatorname{GL}}
\newcommand{\Sym}{\operatorname{Sym}}
\newcommand{\U}{\mathrm{U}}
\newcommand{\SU}{\mathrm{SU}}
\newcommand{\CC}{\mathbb{C}}
\newcommand{\KK}{\mathbb{K}}
\newcommand{\PP}{\mathbb{P}}
\newcommand{\RR}{\mathbb{R}}
\newcommand{\ZZ}{\mathbb{Z}}
\newcommand{\QQ}{\mathbb{Q}}
\newcommand{\calL}{\mathcal{L}}
\newcommand{\diff}{\mathrm{d}}
\newcommand{\eps}{\varepsilon}
\newcommand{\rmf}{\mathrm{f}}
\newcommand{\rmt}{\mathrm{t}}
\newcommand{\rmd}{\mathrm{d}}
\newcommand{\rmv}{\mathrm{v}}
\newcommand{\rme}{\mathrm{e}}
\newcommand{\gp}[1]{\underline{\mathbf{#1}}}
\newcommand{\Bl}{\operatorname{Bl}}
\begin{document}

\begin{abstract}
Given a Lagrangian submanifold $L$ in a symplectic manifold $X$, the homological Lagrangian monodromy group $\mathcal{H}_L$ describes how Hamiltonian diffeomorphisms of $X$ preserving $L$ setwise act on $H_*(L)$.  We begin a systematic study of this group when $L$ is a monotone Lagrangian $n$-torus.  Among other things, we describe $\mathcal{H}_L$ completely when $L$ is a monotone toric fibre, make significant progress towards classifying the groups than can occur for $n=2$, and make a conjecture for general $n$.  Our classification results rely crucially on arithmetic properties of Floer cohomology rings.
\end{abstract}

\maketitle


\section{Introduction}

\subsection{Homological Lagrangian monodromy}

The Hamiltonian diffeomorphism group $\Ham(X)$ of a symplectic manifold $(X, \omega)$ is a central object in symplectic topology and has been studied intensively.  Given a Lagrangian submanifold $L \subset X$, there is a natural relative version
\[
\Ham(X, L) = \{\phi \in \Ham(X) : \phi(L) = L\},
\]
which provides a bridge between the Hamiltonian dynamics of $X$ and the Floer theory of $L$, but it has received much less attention.

This group is infinite-dimensional and difficult to get a handle on---heuristically, its Lie algebra is the space of exact $1$-forms on $X$ that pull back to $0$ on $L$.  But one way to extract information about its group of connected components is through its action on the homology of $L$.  The image of this representation in $\GL(H_*(L))$ is the \emph{(Hamiltonian) homological Lagrangian monodromy group} of $L$, denoted $\HLM$.  All (co)homology groups are over $\ZZ$ unless indicated otherwise.

The group $\HLM$ was introduced by M.-L.~Yau in \cite{YauMonodromyAndIsotopy}, building on groundbreaking work of Chekanov \cite{Chekanov}, and the following is known:
\begin{itemize}
\item Yau \cite{YauMonodromyAndIsotopy}: For a monotone Clifford (product) torus in $\CC^2$, $\HLM$ lies in a subgroup of $\GL(H_*(L))$ isomorphic to $\ZZ/2$, proved using symplectic capacities.  This subgroup can be realised explicitly, by using $\U(2) \subset \Ham(\CC^2)$ to swap the two factors.  Yau also gives a similar argument for the Chekanov torus in $\CC^2$, using a different $\ZZ/2$ subgroup.
\item Chekanov \cite[Theorem 4.5]{Chekanov}: For an arbitrary (not necessarily monotone) Clifford or Chekanov torus $L$ in $\CC^n$, $\HLM$ comprises those automorphisms of $H_1(L)$ that preserve the Maslov class, area class, and certain `distinguished classes' in $H_1(L)$.  Chekanov's result is actually phrased in terms of symplectomorphisms, rather than Hamiltonian isotopies, but they can be upgraded to symplectic isotopies by \cite[Lemma 3.1(b)]{Chekanov} and then to Hamiltonian isotopies by flux considerations as in \cref{rmkFlux}.
\item Hu--Lalonde--Leclercq \cite{HuLalondeLeclercq}: $\HLM$ is trivial if $L$ is weakly exact.  The proof uses Floer cohomology and the relative Seidel morphism, and in general works with $H_*(L; \ZZ/2)$ in place of $H_*(L)$.  It can be upgraded to $\ZZ$ coefficients if $L$ is relatively spin and one restricts attention to those $\phi \in \Ham(X, L)$ that preserve some relative spin structure in a suitable sense.
\item Ono \cite[Section 4]{Ono}: For $(S^1)^n \subset D^2(a)^n$, $\HLM$ is trivial if $a \leq 2\pi$ and contains the symmetric group on the $n$ factors if $a > 2\pi$.  Here $S^1$ is the unit circle and $D^2(a)$ is the symplectic $2$-disc of area $a$.  The proof uses displacement energy to constrain $\HLM$, and with the methods of this paper it is straightforward to upgrade `contains' to `is equal to' in the $a > 2\pi$ case.
\item Ono \cite[Section 5]{Ono}: For the product of equators in $S^2 \times S^2$, where the two factors have equal area, $\HLM$ is generated by the rotations of each sphere through angle $\pi$ about a horizontal axis.  The proof uses Floer cohomology with local systems, and its Hamiltonian-invariance, and extends to the product of equators in $(S^2)^n$ for any $n$.
\item Mangolte--Welschinger \cite[Corollary 3.2]{MangolteWelschinger}: If $L \subset X$ is a Lagrangian $2$-torus in a uniruled symplectic $4$-manifold then $\HLM$ contains no hyperbolic element (meaning an element with real eigenvalues different from $\pm 1$).  The proof uses symplectic field theory and Gromov--Witten theory.
\item Keating \cite[Section 4.2]{Keating}: There are monotone Lagrangians in $\CC^3$ with $\HLM$ non-trivial.  Topologically they are of the form $S^1 \times \Sigma_g$, where $\Sigma_g$ is a surface appropriate genus $g$.
\end{itemize}
As far as we are aware, these are the only results on $\HLM$ in the literature.  Recently, Porcelli has studied the corresponding group for \emph{generalised} homology theories in the weakly exact setting \cite{Porcelli}.

The goal of this paper is to initiate a systematic study of $\HLM$ in the case where $L$ is a monotone Lagrangian torus.  Recall that a Lagrangian $L \subset X$ is \emph{monotone} if the image of the Maslov class $\mu \in H^2(X, L)$ in $H^2(X, L; \RR)$ is positively proportional to class of the symplectic form $[\omega]$.  This gives good compactness properties for moduli spaces of holomorphic curves with boundary on $L$, which will be crucial to our constraints.  Since the cohomology ring of a torus is generated in degree $1$, the action of $\Ham(X, L)$ on $H_*(L)$ is completely captured by its action on $H_1(L)$.  So we may think of $\HLM$ as a subgroup of $\GL(H_1(L)) \cong \GL(n, \ZZ)$, where $n$ is the dimension of $L$, and this is what we shall do.

\begin{remark}
\label{rmkCompact}
We assume throughout that $X$ is either compact or that there is some mechanism preventing holomorphic curves escaping to infinity (for example, $X$ is convex and cylindrical at infinity).  In the case of toric manifolds we will discuss a specific mechanism based on semiprojectivity.
\end{remark}

Along the way we also prove some results about the group $\HLMS$, defined in the same way as $\HLM$ but with $\Ham(X, L)$ replaced by
\begin{multline*}
\Symp_\infty(X, L) = \{\phi \in \Symp(X) : \phi(L) = L \text{, $\phi$ is compactly supported or otherwise respects} \\ \text{the mechanism preventing escape of holomorphic curves}\}.
\end{multline*}
We call this $\HLMS$ rather than $\mathcal{S}_L$ to distinguish it from the \emph{smooth monodromy group} studied in \cite{YauMonodromyGroups}.  Note that we needn't say anything about behaviour at infinity in the definition of $\HLM$, since we can always cut off the generating Hamiltonian of any $\phi \in \Ham(X, L)$ outside the region swept out by $L$ in order to make it compactly supported.  This also means that $\HLM$ is always a subgroup of $\HLMS$.


\subsection{Main results}

Fix a monotone Lagrangian $n$-torus $L \subset X$.  Given a class $\beta \in H_2(X, L)$ with $\mu(\beta) = 2$, we can count holomorphic discs with boundary on $L$ that represent the class $\beta$.  We denote this count by $n_\beta$, and define it precisely in \cref{sscDiscCounts}.  Let $B_1 \subset H_1(L)$ denote the set of boundaries of classes $\beta$ with $n_\beta \neq 0$, and let $r$ be the rank of the rational span of $B_1$ in $H_1(L; \QQ)$.

\begin{mainthm}
\label{Theorem1}
\begin{enumerate}
\item\label{Theorem1itm1} The action of $\HLMS$ on $H_1(L)$ permutes the elements of $B_1$.
\item\label{Theorem1itm2} If $r = n$ then the induced map $\HLMS \to \Sym(B_1)$ is injective, so $\HLMS$ is finite.
\item\label{Theorem1itm3} If $r = 0$ then $\HLM$ is trivial.
\end{enumerate}
A fortiori, the first two parts hold with the subgroup $\HLM$ in place of $\HLMS$.
\end{mainthm}

\begin{remark}
\label{rmkProductSwap}
The last part cannot be extended to $\HLMS$ in general.  Consider, for example, the product of the zero sections in $T^*S^1 \times T^*S^1$.  The group $\HLMS$ can swap the two factors but $\HLM$ cannot.  Here $\Symp_\infty(X, L)$ comprises those symplectomorphisms that preserve $L$ setwise and are exact and cylindrical at infinity.

If we relax our notion of monotonicity to the requirement that $\mu$ and $\omega$ are positively proportional as homomorphisms $\pi_2(X, L) \to \RR$ then we can simplify this example, as follows.  Take $L = K^2 \subset T^4$, where $K \subset T^2$ is a non-contractible simple closed curve.  Again $\HLMS$ can swap the two factors but $\HLM$ cannot.
\end{remark}

The proof of \ref{Theorem1itm1} uses symplectomorphism-invariance of the disc counts $n_\beta$, the main idea of which is standard but which requires some care to deal with signs, relying on a special feature of spin structures on tori (a priori, one could imagine two discs counting with the same sign being carried by a symplectomorphism to two discs counting with opposite signs).   From this, \ref{Theorem1itm2} is an immediate consequence since a linear automorphism of $H_1(L)$ is completely determined by its action on a spanning set.  The proof of \ref{Theorem1itm3}, meanwhile, adapts the argument used by Ono for the product of equators in $S^2 \times S^2$, and generalises the $L \cong T^n$ case of Hu--Lalonde--Leclercq's result.

Our second result concerns compact toric manifolds.  For us, a \emph{toric manifold} $X$ means a symplectic toric manifold, obtained from $(\CC^N, \omega_\mathrm{std})$ by symplectic reduction via a subtorus $T^{N-n}$ of $T^N$ acting in the obvious way.  This is described in more detail in \cref{sscToricBackground}, but for now we note that $X$ is encoded by a convex \emph{moment polytope} $\Delta$, and carries a residual Hamiltonian action of $T \coloneqq T^N / T^{N-n}$.  The vector space in which $\Delta$ lives is naturally identified with the dual $\mathfrak{t}^\vee$ of the Lie algebra of $T$, and under this identification $\Delta$ is precisely the image of the moment map for the $T$-action.  The fibres of the moment map over interior points of $\Delta$ are Lagrangian free $T$-orbits---\emph{(Lagrangian) toric fibres}---and at most one of them is monotone.

\begin{definition}
\label{defToricAut}
By a \emph{toric automorphism} of $X$ we mean a symplectomorphism $\phi$ of $X$, which is $T$-equivariant after twisting by a Lie group automorphism $\psi$ of $T$.  Explicitly, this means $\phi(t \cdot x) = \psi(t) \cdot \phi(x)$ for all $x \in X$ and $t \in T$.  If $X$ has a monotone fibre then it is naturally preserved setwise by toric automorphisms, so there is a homomorphism from the toric automorphism group to $\HLMS$.  By the \emph{toric Torelli group} of $X$ we mean the group of toric automorphisms acting trivially on $H^*(X)$.
\end{definition}

\begin{remark}
Toric manifolds as defined in this way are naturally K\"ahler manifolds, and moreover can be constructed algebro-geometrically as GIT quotients.  See \cite{ProudfootToric} for a discussion of this approach and its equivalence to the above.  In the algebro-geometric setting there is a natural notion of toric morphism, and the algebro-geometric toric automorphism group agrees with the toric automorphism group as in \cref{defToricAut}.
\end{remark}

With this setup in place we can give our second result, which describes $\HLM$ and $\HLMS$ for a monotone Lagrangian toric fibre $L$ in a compact toric manifold $X$.  We first state it geometrically and then give a more concrete combinatorial formulation.

\begin{repmainthm}{Theorem2}[Geometric version]
For a monotone toric fibre $L \subset X$, the natural homomorphism from the toric automorphism group of $X$ to $\HLMS$ is an isomorphism.  Its restriction to the toric Torelli group gives an isomorphism onto $\HLM$.
\end{repmainthm}

To give the combinatorial formulation, which is the version we will prove, let $A \subset \mathfrak{t}$ be the lattice given by the kernel of the exponential map.  Each facet (codimension-$1$ face) $F_1, \dots, F_N$ of $\Delta$ has a primitive normal vector $\nu_j$ in $A$.  There is a canonical identification between $H_1(L)$ and $A$, and between $H_2(X, L)$ and the abelian group $\ZZ^N$ freely generated by certain \emph{basic classes} $\beta_j$, such that $\partial \beta_j \in H_1(L)$ is identified with $\nu_j \in A$.  Write $K \subset \ZZ^N$ for the space of linear relators between the $\nu_j$.

It is well-known by work of Cho \cite{Cho} and Cho--Oh \cite{ChoOh} that
\[
n_\beta = \begin{cases} 1 & \text{if $\beta = \beta_j$ for some $j$} \\ 0 & \text{otherwise}.\end{cases}
\]
This means that $B_1 = \{\nu_1, \dots, \nu_N\}$, and we are in the situation of \cref{Theorem1}\ref{Theorem1itm2}, so $\HLM$ and $\HLMS$ can be viewed as subgroups of the symmetric group $S_N$ on the $\nu_j$.

\begin{mainthm}[Combinatorial version]
\label{Theorem2}
For a monotone toric fibre $L$, $\HLM$ and $\HLMS$ comprise those permutations of the $\nu_j$ that fix $K$ pointwise and setwise respectively.
\end{mainthm}

To deduce the geometric version from the combinatorial version, first note that the latter says $\HLMS$ is as large as it could possibly be: it contains every permutation of the $\nu_j$ that can be induced by a linear automorphism of $H_1(L)$.  This amounts to saying that $\HLMS$ is, via its dual representation on $H^1(L; \RR) \cong \mathfrak{t}^\vee$, the group of symmetries of the moment polytope $\Delta$, and this corresponds precisely to the toric automorphism group of $X$.  The fact that $\HLM$ corresponds to the toric Torelli group then follows from the fact that there is a natural bijection between $K$ and $H^2(X)$, which generates the whole cohomology ring of $X$, so fixing $K$ pointwise is equivalent to acting trivially on $H^*(X)$.

\begin{example}
\label{exBlowup}
Let $X$ be the monotone toric blowup of $\CC\PP^2$ at one point, and $L \subset X$ the unique monotone Lagrangian toric fibre.  We can set things up so that $\Delta$ is as shown in \cref{figDelta}, with $L$ the fibre over the point $0$.
\begin{figure}[ht]
\begin{tikzpicture}[blob/.style={circle, draw=black, fill=black, inner sep=0, minimum size=\blobsize}]
\def\blobsize{1mm}

\draw (0, 0) node[blob]{};
\draw (0, 0) node[anchor=east]{$0$};

\draw (-1, 0) -- (0, -1) -- (2, -1) -- (-1, 2) -- cycle;
\draw (-1, 1) [anchor=east]node{$F_3$};
\draw (-0.5, -0.5) [anchor=north east]node{$F_4$};
\draw (1, -1) [anchor=north]node{$F_1$};
\draw (0.5, 0.5) [anchor=south west]node{$F_2$};
\end{tikzpicture}
\caption{The moment polytope $\Delta$ of the monotone toric blowup of $\CC\PP^2$ at a point.\label{figDelta}}
\end{figure}
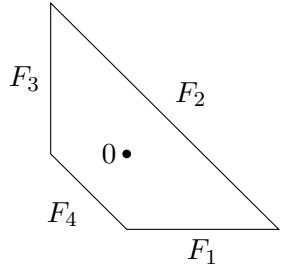
Here $F_1$ and $F_3$ correspond to the proper transforms of the two lines whose intersection we blew up, $F_2$ corresponds to a disjoint line, and $F_4$ corresponds to the exceptional divisor.  The space $K$ of relators is spanned by $\nu_1+\nu_2+\nu_3$ and $\nu_2+\nu_4$.  So $\HLM$ comprises those permutations of the $\nu_j$ that preserve both of these expressions, which is generated by the transposition $(1 \ 3)$ in $S_4$ that swaps $\nu_1$ and $\nu_3$.  In fact, any permutation preserving $K$ setwise must also preserve these two expressions, as they are the unique elements whose coefficients are $1, 1, 1, 0$ and $1, 1, 0, 0$ in some order, respectively.  So $\HLMS = \HLM$.
\end{example}

\begin{example}
For the product of equators in $(S^2)^n$, we can take $\Delta = [-1, 1]^n$.  This has $N = 2n$, with $\nu_{2j-1}$ the $j$th standard basis vector and $\nu_{2j}$ its negative.  The relators are $\nu_{2j-1}+\nu_{2j}$, so $\HLM$ is generated by the transpositions of pairs $\nu_{2j-1}$, $\nu_{2j}$, which correspond to rotating each factor through angle $\pi$ about a horizontal axis, as proved by Ono.  For $\HLMS$ we get $\HLM \rtimes S_n$, generated by $\HLM$ and permutations of the factors.
\end{example}

We discuss the extension of \cref{Theorem2} to a large class of non-compact toric varieties in \cref{sscNonCompactToric}.  The descriptions of $\HLM$ and $\HLMS$ are exactly the same as in the compact case, where $\Symp_\infty(X, L)$ means those symplectomorphisms $\phi$ that preserve $L$ setwise and are holomorphic with respect to the standard complex structure outside a compact set.

\begin{example}
For the Clifford torus in $\CC^n$, we can take $\Delta$ to be the non-negative orthant, with $N=n$ and $\nu_j$ the $j$th standard basis vector.  There are no linear relators, so $\HLM = \HLMS$ comprises all permutations of the factors, generalising Yau's result for $\CC^2$ and recovering the monotone version of Chekanov's result.
\end{example}

\begin{remark}
In this toric setting, the only properties of Hamiltonian diffeomorphisms used to derive the constraints are that they are symplectomorphisms and they act trivially on $H_2(X)$.  So we would get the same result if we replaced $\Ham(X, L)$ with, for example, the group
\[
\Symp_0(X, L) = \{\phi \in \Symp(X, L) \text{ or } \Symp_\infty(X, L) : \text{$\phi$ is symplectically isotopic to $\id_X$}\}.
\]
In non-toric settings the action of $\Ham(X, L)$ can be strictly smaller than that of $\Symp_0(X, L)$, at least if we relax the notion of monotonicity as in \cref{rmkProductSwap}.  For example, let
\[
X = S^2 \times S^2 \times S^1 \times [0, 1] \big/ (x, y, e^{i\theta}, 0) \sim (y, x, e^{i\theta}, 1),
\]
i.e.~the mapping torus of swapping the $S^2$ factors, with symplectic form $\omega_{S^2} \oplus \omega_{S^2} \oplus \rmd \theta \wedge \rmd t$, where $t$ is the coordinate on the $[0, 1]$ factor.  Let $L = (\text{equator}) \times (\text{equator}) \times S^1 \times \{0\}$.  Any class in $\pi_2(X, L)$ can be represented by one contained in $S^2 \times S^2 \times S^1 \times \{0\}$, so the weaker form of monotonicity holds.  By translating in the $t$-direction we see that swapping the first two $S^1$ factors of $L$ can be realised by a symplectic isotopy.  However, by the same argument as used by Ono for the product of equators in \cite[Section 5]{Ono} it cannot be realised by a Hamiltonian diffeomorphism.
\end{remark}

Our next topic of study is the classification of possible groups that can occur for $\HLM$ when $n=2$, and in light of \cref{Theorem1}\ref{Theorem1itm3} we restrict attention to the case $r > 0$.

\begin{mainthm}
\label{Theorem3}
Suppose $n=2$ and $r > 0$.  If $X$ is symplectically aspherical (meaning $\omega$ vanishes on $\pi_2(X)$) and simply connected then $\HLMS$ naturally embeds in the infinite dihedral group on
\[
\{\gamma \in H_1(L) : \text{$\mu(\beta) = 2$ for some, or equivalently all, $\beta \in \pi_2(X, L)$ with boundary $\gamma$}\}.
\]
The image is either trivial or is generated by a single involution, in which case there are two possibilities up to conjugation in $D_\infty$, namely involutions with or without a fixed point.
\end{mainthm}

The hypotheses are automatically satisfied if $X = \CC^2$; the $r > 0$ condition holds since if $r$ were $0$ then $HF^*(L, L)$ would be isomorphic to $H^*(L)$ by \cite[Proposition 6.1.4(2)]{biran2007quantum}, and hence $L$ would be non-displaceable.  There are thus at most three possibilities for each of $\HLMS$ and $\HLM$ when $X = \CC^2$.  Yau \cite{YauMonodromyAndIsotopy} shows that the Clifford and Chekanov tori realise the two non-trivial possibilities for $\HLM$, by describing explicit non-trivial Hamiltonian isotopies.  Given these isotopies, \cref{Theorem3} immediately reproves Yau's result that they generate all of $\HLM$, by a completely different route.

Yau asks \cite[Question 3.4]{YauMonodromyAndIsotopy} whether $\HLM \cong \ZZ/2$ for all monotone tori in $\CC^2$, which is a weak form of the (open) question of whether any monotone Lagrangian torus in $\CC^2$ is symplectomorphic or Hamiltonian isotopic to either a Clifford or Chekanov torus.  \Cref{Theorem3} reduces this to the following.

\begin{question}
Is $\HLM$ non-trivial for all monotone tori in $\CC^2$?
\end{question}

In fact, we do not know any examples with trivial $\HLM$ satisfying the hypotheses of \cref{Theorem3}, although there seems to be no reason why they should not exist.

\begin{question}
Can $\HLM$ ever be trivial in the setting of \cref{Theorem3}?
\end{question}

Returning now to a general monotone Lagrangian $2$-torus $L \subset X$, a choice of basis for $H_1(L)$ lets us view $\HLM$ as a subgroup of $\GL(2, \ZZ)$, and if we forget the basis then the subgroup is well-defined up to conjugacy.  Recall also (this is recapped in \cref{sscFloer}) that associated to $L$ is a \emph{superpotential} $W \in \ZZ[H_1(L)]$ which, again after a choice of basis for $H_1(L)$, we can view as a Laurent polynomial $W(x, y) \in \ZZ[x^{\pm1}, y^{\pm1}]$.  We prove the following results.

\begin{mainthm}  For $n=2$ we have:
\label{Theorem4}
\begin{enumerate}
\item\label{rk0} If $r = 0$ then $\HLM$ is trivial (by \cref{Theorem1}\ref{Theorem1itm3}).
\item\label{rk1} If $r = 1$ then, after a suitable choice of basis for $H_1(L)$, either:
\begin{itemize}
\item $W = a\pm x$ for some $a \in \ZZ$, and $\HLM$ is a subgroup of
\begin{equation}
\label{eqinfinite1}
\left\{ \begin{pmatrix} 1 & \ZZ \\ 0 & \pm1 \end{pmatrix} \right\} \subset \GL(2, \ZZ).
\end{equation}
\item $W = a \pm \big(x + \frac{1}{x}\big)$ for some $a \in \ZZ$, and $\HLM$ is a subgroup of
\begin{equation}
\label{eqinfinite2}
\left\{ \begin{pmatrix} \pm 1 & 2\ZZ \\ 0 & 1 \end{pmatrix} \right\} \subset \GL(2, \ZZ).
\end{equation}
\item $W = a \pm \big(x - \frac{1}{x}\big)$ for some $a \in \ZZ$, and $\HLM$ is a subgroup of
\begin{equation}
\label{eqinfinite3}
\left\{ \begin{pmatrix} 1 & 4\ZZ \\ 0 & 1 \end{pmatrix} \right\} \subset \GL(2, \ZZ).
\end{equation}
\item $\HLM$ is trivial.
\end{itemize}
\item\label{rk2} If $r = 2$ then $\HLM$ is finite (by \cref{Theorem1}\ref{Theorem1itm2}), so corresponds to one of the $13$ conjugacy classes of finite subgroups of $\GL(2, \ZZ)$.  Of these, $6$ are impossible, $5$ are realised by monotone Lagrangian toric fibres, and the remaining $2$ cannot occur torically but we are unable to rule them out in general.
\end{enumerate}
\end{mainthm}

Arithmetic properties of $W$ and the Floer cohomology rings play an important role in our arguments.  In particular, we consider Floer cohomology over $\ZZ$ and pass to fields of different characteristic to exploit divisibility of various structure constants.

\begin{remark}
Tonkonog proves in \cite[Lemma 3.3]{TonkonogSH} that if $X$ is compact then the constant term in the superpotential of any monotone torus $L \subset X$ must vanish.  So in this case the $a$ in \cref{Theorem4} must be zero.
\end{remark}

\begin{mainthm}
\label{Theorem5}
If $L$ is the product of the equator in $\CC\PP^1$ with the zero section in $T^*S^1$ then $W = x+\frac{1}{x}$ and $\HLM$ is \eqref{eqinfinite2}.  In contrast, if $L$ is the product of the unit circle in $\CC$ with the zero section in $T^*S^1$ then $W = x$ but $\HLM$ is trivial.  In both cases we are using the obvious product basis of $H_1(L)$.
\end{mainthm}

We use flux considerations both to construct non-trivial elements in the first case and to prove triviality in the second case.  We believe that result for $\CC\PP^1 \times T^*S^1$ represents the first known instance of elements of infinite order in $\HLM$ for monotone $L$.  Whilst editing this paper, we learnt from Brendel that he has independently discovered the same elements \cite{Brendel}, by completely different methods.

Some obvious questions remain unanswered.

\begin{question}
Do the two groups left open in \cref{Theorem4}\ref{rk2} arise?
\end{question}

We expect that the answer is yes, for the following reason.  These two groups---which are generated by rotations of $H_1(L)$ of order $2$ and $3$ respectively---are subgroups of other groups that definitely \emph{do} arise, isomorphic to the dihedral groups $D_4$ and $D_6$ of order $4$ and $6$.  So any argument that ruled them out would say `if $\HLM$ contains a rotation of order $2$ or $3$ then it must also contain a reflection', which would be a remarkable dynamical result.

\begin{question}
Do the groups \eqref{eqinfinite1} and \eqref{eqinfinite3} in \cref{Theorem4}\ref{rk1} arise?
\end{question}

In \cite{BackgroundCocycles} we show that \eqref{eqinfinite3} occurs for the product of the equator in $\CC\PP^1$ with the zero section in $T^*S^1$, equipped with a specific non-trivial relative spin structure $\mathfrak{s}$, where $\HLM$ now denotes those Hamiltonian diffeomorphisms respecting $\mathfrak{s}$ in a suitable sense.  For \eqref{eqinfinite1}, however, we do not have a good guess for what the answer should be!  \cref{Theorem5} shows that the obvious thing to try, namely the product Lagrangian in $\CC \times T^*S^1$, does not work.  More generally, in \cref{rmkFlux} we explain why if \eqref{eqinfinite1} arises for $L \subset X$ then $\pi_2(X)$ must contain an element of Chern number $1$, so first one must find such an $X$ containing a monotone torus with potential $a \pm x$.

Finally, we consider the classification problem in dimensions $n > 2$.  We focus on the case where $\HLM$ is finite (which holds if $r=n$ for example), and see what can be said about $\HLM$ as an abstract group, up to isomorphism.

\begin{mainthm}
\label{Theorem6}
If $n=3$ and $\HLM$ is finite then $\HLM$ is isomorphic to a subgroup of $S_4$, $S_3 \times S_2$, or $S_2 \times S_2 \times S_2$.
\end{mainthm}

These three groups arise for the monotone toric fibres in $\CC\PP^3$, $\CC\PP^2 \times \CC\PP^1$, and $(\CC\PP^1)^3$ respectively.  We are writing $\ZZ/2$ as $S_2$ for consistency with what follows.

As $n$ grows, the problem rapidly becomes much more complicated, but computer experiments suggest the following.

\begin{conj}
\label{conjHighDim}
If $\HLM$ is finite then either:
\begin{enumerate}
\item\label{conjitm1} $\HLM$ is isomorphic to a subgroup of $\GL(n-1, \ZZ)$.
\item\label{conjitm2} There exist integers $n_1, \dots, n_k \geq 2$ with $\sum (n_j - 1) = n$ such that $\HLM$ is isomorphic to a subgroup of $S_{n_1} \times \dots \times S_{n_k}$.
\end{enumerate}
\end{conj}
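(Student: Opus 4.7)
The plan is to analyze the finite group $G := \HLM$ via its action on $B_1 \subset H_1(L)$ (as in \cref{thm1}\ref{thm1itm1}), and to case-split on the existence of a non-zero $G$-fixed vector in $H_1(L)$. Decompose $B_1 = O_1 \sqcup \cdots \sqcup O_k$ into $G$-orbits of sizes $n_j := |O_j|$, and form the orbit sums $s_j := \sum_{v \in O_j} v \in H_1(L)^G$.

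If there is a non-zero $G$-fixed vector in $H_1(L;\QQ)$---as occurs automatically when some $s_j \neq 0$, and also (by averaging) when $H_1(L;\QQ)/\mathrm{span}_\QQ(B_1)$ contains a trivial $G$-subrepresentation---then, letting $v_0$ be the primitive generator of its ray, $\ZZ v_0$ is a $G$-invariant primitive sublattice and one has an extension $0 \to \ZZ v_0 \to H_1(L) \to H_1(L)/\ZZ v_0 \to 0$ with trivial action on $\ZZ v_0$.  If $g \in G$ acts trivially on the rank-$(n-1)$ quotient, then $g(w) = w + \phi(w) v_0$ for some homomorphism $\phi \colon H_1(L) \to \ZZ$, and iterating using $g v_0 = v_0$ gives $g^m(w) = w + m\phi(w) v_0$; finite order of $g$ then forces $\phi = 0$, hence $g = \id$.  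Therefore $G \hookrightarrow \GL(n-1, \ZZ)$, yielding case~\ref{conjitm1}.

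Otherwise $G$ fixes no non-zero vector of $H_1(L;\QQ)$.  Then every $s_j = 0$, so $n_j \geq 2$ and the $\QQ$-span $V_j$ of $O_j$ has $\dim V_j \leq n_j - 1$.  Under the main sub-case $r = n$, the $V_j$ together span $H_1(L;\QQ)$, the action of $G$ on $B_1$ is faithful by \cref{thm1}\ref{thm1itm2}, and the permutation representation gives an injection $G \hookrightarrow S_{n_1} \times \cdots \times S_{n_k}$; to land in case~\ref{conjitm2} what remains is the equality $\sum(n_j - 1) = n$, equivalently that $H_1(L;\QQ) = V_1 \oplus \cdots \oplus V_k$ is a \emph{direct} sum (which automatically forces $\dim V_j = n_j - 1$ for each $j$).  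The sub-case $r < n$ with no fixed vectors produces a non-trivial fixed-point-free $G$-quotient representation, which I would try to handle inductively on $n$ by reducing to lower-dimensional instances acting on $\mathrm{span}_\ZZ(B_1)$ and on its quotient separately.

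The main obstacle is the direct-sum / rigidity step.  A failure corresponds to a non-trivial $G$-equivariant linear relation between elements of different orbits, i.e.\ a non-trivial (trivial-free) $G$-subrepresentation of $\bigoplus_j \mathrm{Aug}(\QQ[O_j])$ mapping to zero in $H_1(L;\QQ)$.  Excluding such hidden relations seems to require input beyond the bare orbit combinatorics---for instance via $G$-invariance of the superpotential $W \in \ZZ[H_1(L)]$ whose monomials are indexed by $B_1$ with positive integer coefficients (cf.\ \cref{sscFloerConstraints}), constraining which span relations can occur; or by arguing that any such accidental relation itself produces an extra $G$-fixed vector, returning us to case~\ref{conjitm1}.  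Establishing this rigidity in general is the heart of the conjecture, and is consistent with the authors presenting it only as a conjecture supported by computer experiments.
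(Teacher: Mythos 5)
This statement is \cref{conjHighDim}, which the paper explicitly presents as a \emph{conjecture}: the authors do not prove it, but only verify it by computer for $n\leq 6$ using \cref{corEigenspaces} and the CaratInterface catalogue, prove it for $n=3$ (\cref{thmn3}), and prove it for monotone toric fibres via \cref{ToricThm} and \cref{rmkConjectureToric}. So there is no proof in the paper to compare yours against, and a complete blind proof would be a new result.

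Your first case is correct and is essentially the observation the authors make in their closing remark about why case~(a) cannot be dropped: if $G=\HLM$ fixes a non-zero vector of $H_1(L;\QQ)$, hence a primitive integral vector $v_0$, then the kernel of $G\to\GL(H_1(L)/\ZZ v_0)$ consists of transvections $w\mapsto w+\phi(w)v_0$, which have infinite order unless $\phi=0$; so $G$ embeds in $\GL(n-1,\ZZ)$. The second case, however, contains the genuine gap, which you correctly flag yourself: nothing in the orbit combinatorics forces $\sum_j(n_j-1)=N-k\leq n$, where $N=\lvert B_1\rvert$ and $k$ is the number of $G$-orbits. (Note that only the inequality is needed, not your stronger direct-sum statement, since one can pad with extra $S_2$ factors to reach equality; but even the inequality is the crux.) A single large orbit spanning a low-dimensional subspace --- say $G$ acting transitively on $B_1$ with $N\gg n+1$ --- is not excluded by anything you write, and would give only $G\hookrightarrow S_N$, far weaker than case~(b). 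The paper obtains $N-k\leq n$ only for toric fibres, where it follows from the explicit structure of the relation lattice $K$ (\cref{rmkConjectureToric}), not from general principles. In addition, when $r<n$ the map $G\to\Sym(B_1)$ need not be injective (\cref{thm1} gives faithfulness only for $r=n$), and your proposed induction on $n$ for that sub-case is not actually set up. Closing these gaps would require Floer-theoretic input beyond disc-count invariance --- the authors' own partial progress goes through \cref{corEigenspaces} and case-by-case analysis of crystallographic groups --- and this is precisely why the statement is offered as a conjecture.
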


This can be verified directly for $n\leq 6$, as explained in \cref{sscHigherDim}.  When $L$ is a monotone toric fibre, \cref{Theorem2} implies that $\HLM$ is isomorphic to $S_{N_1} \times \dots \times S_{N_k}$ for some $N_j$ satisfying $\sum (N_j - 1) \leq n$, so case \ref{conjitm2} holds; see \cref{rmkConjectureToric}.  For each choice of $n_j$ in case \ref{conjitm2}, the full group $S_{n_1} \times \dots \times S_{n_k}$ arises for the monotone toric fibre in $\CC\PP^{n_1-1} \times \dots \times \CC\PP^{n_k-1}$.


\subsection{Structure of the paper}

In \cref{secGeneral} we discuss some generalities on holomorphic discs and Floer theory for monotone tori, and prove \cref{Theorem1}.  \cref{secToric} then recaps the basics of toric geometry and proves \cref{Theorem2}.  In \cref{secFurtherConstraints} we introduce two further tools for analysing $\HLM$, one based on Floer continuation elements and the other based on $1$-eigenspaces of $\HLM$ acting on $H_1(L)$.  \cref{secn2} focuses on the classification problem for $n=2$, and applies the tools developed earlier to prove \cref{Theorem3,Theorem4}.  We also discuss, in \cref{sscFloerConstraints}, a sort of converse question, namely: given $\HLM$, what can be said about the Floer theory of $L$?  We conclude the $n=2$ analysis with \cref{secThm5}, in which we consider $\CC \times T^*S^1$ and $\CC\PP^1 \times T^*S^1$ and prove \cref{Theorem5}.  The paper ends with \cref{secHigher}, where we discuss the case of $n > 2$ and prove \cref{Theorem6}.


\subsection{Acknowledgements}

We are grateful to Jo\'e Brendel for telling us about his work and for many helpful comments, and to the anonymous referees for their valuable suggestions.  JS also thanks Jonny Evans, Ailsa Keating, Noah Porcelli, Oscar Randal-Williams, Dhruv Ranganathan, Ivan Smith, Jake Solomon, and Umut Varolgunes for useful conversations.  This paper began in an undergraduate summer research project supervised by the second-named author.


\section{General considerations}
\label{secGeneral}

Fix throughout the rest of the paper a monotone Lagrangian $n$-torus $L$ in a symplectic manifold $(X, \omega)$, in which holomorphic curves cannot escape to infinity, as in \cref{rmkCompact}.


\subsection{Index $2$ disc counts}
\label{sscDiscCounts}

Take a class $\beta \in H_2(X, L)$ with $\mu(\beta) = 2$.  In this subsection we define the count $n_\beta$ of holomorphic discs in class $\beta$.  This is all standard, except perhaps for the discussion of invariance of orientations.

For an $\omega$-compatible almost complex structure $J$ on $X$, let $\mathcal{M}_1(J, \beta)$ denote the moduli space of $J$-holomorphic discs $u : (D^2, \partial D^2) \to (X, L)$ representing class $\beta$, with a single boundary marked point $z_0$, modulo reparametrisation.  This comes with an evaluation map $\operatorname{ev} : \mathcal{M}_1(J, \beta) \to L$ sending $(u, z_0)$ to $u(z_0)$.  If $J$ is \emph{regular}, which holds for a generic choice, then the moduli space is a smooth compact $n$-manifold.  Different choices of regular $J$ can be joined by a path, and if this path is suitably generic then it gives rise to a cobordism between the corresponding moduli spaces and evaluation maps.  If $X$ is non-compact then we should restrict attention to the sub-class of $J$ for which we can prevent $J$-holomorphic curves escaping to infinity.

\begin{definition}
We define $n_\beta$ to be the degree of $\operatorname{ev} : \mathcal{M}_1(J, \beta) \to L$.  By cobordism-invariance of degree this is independent of $J$, and by Gromov compactness there are only finitely many $\beta$ with $n_\beta \neq 0$.  We extend $n_\beta$ by zero to classes $\beta$ with $\mu(\beta) \neq 2$.
\end{definition}

To obtain integer-valued counts, rather than mod $2$, the moduli space $\mathcal{M}_1(J, \beta)$ needs to be oriented relative to $L$.  It is now well-understood, following de Silva \cite[Theorem Q]{VdS} and Fukaya--Oh--Ohta--Ono \cite[Theorem 8.1.1]{FOOObig}, that such an orientation is provided by a choice of spin structure on $L$.  The torus $L$ carries a \emph{standard} spin structure $\mathfrak{s}$, given by choosing an identification $L \cong \RR^n/\ZZ^n$ and using the trivialisation of $T\RR^n$ to trivialise $TL$, and this is the one we shall always use.

\begin{lemma}
\label{lemDiscCountInvariance}
For $\phi \in \Symp_\infty(X, L)$ we have $n_\beta = n_{\phi_*\beta}$.
\end{lemma}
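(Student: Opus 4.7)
The plan is a naturality argument: push the whole setup through $\phi$ to identify $\mathcal{M}_1(J, \beta)$ with $\mathcal{M}_1(J', \phi_*\beta)$, where $J' := \phi_*J$, and then verify that de Silva's orientations match so that the disc counts come out with consistent signs.

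First I would take a regular $\omega$-compatible $J$ and observe that $J' := d\phi \circ J \circ d\phi^{-1}$ is itself $\omega$-compatible, since $\phi$ is symplectic, and regular, because the linearised Cauchy--Riemann operator at a $J$-disc $u$ is conjugated by $d\phi$ to the linearised operator at $\phi \circ u$. Post-composition with $\phi$ then yields a diffeomorphism
\[
\Phi : \mathcal{M}_1(J, \beta) \to \mathcal{M}_1(J', \phi_*\beta), \qquad (u, z_0) \mapsto (\phi \circ u, z_0),
\]
intertwining the evaluation maps via $\operatorname{ev}' \circ \Phi = \phi|_L \circ \operatorname{ev}$. Taking degrees reduces the lemma to showing that $\Phi$ and $\phi|_L$ contribute matching signs in $\{\pm 1\}$.

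Here I would invoke the naturality of de Silva's orientation scheme. The sign attached to a disc $u$ is obtained by extending a boundary trivialisation of $u^*TX$ over $D^2$, restricting to $\partial D^2$, and comparing with the trivialisation of $TL|_{u(\partial D^2)}$ coming from the spin structure $\mathfrak{s}$. Composing with $d\phi$ transports any such extension for $u$ to one for $\phi \circ u$, and turns the $\mathfrak{s}$-trivialisation of $TL$ into the $(\phi|_L)_*\mathfrak{s}$-trivialisation. So $\Phi$ is orientation-preserving when $\mathcal{M}_1(J, \beta)$ is oriented by $\mathfrak{s}$ and $\mathcal{M}_1(J', \phi_*\beta)$ by $(\phi|_L)_*\mathfrak{s}$, and symmetrically $\phi|_L$ has degree $+1$ viewed as a map from $(L, \mathfrak{s})$ to $(L, (\phi|_L)_*\mathfrak{s})$. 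The two signs therefore agree, and the lemma follows, provided $(\phi|_L)_*\mathfrak{s} = \mathfrak{s}$.

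The main obstacle is this last equality. The standard spin structure is the Lie-group spin structure on $T^n$, arising as the trivial double cover of the translation-trivialised frame bundle, and it is fixed tautologically by all translations and all linear automorphisms in $GL(n, \ZZ)$. The action of $\pi_0 \Diff(T^n)$ on the $H^1(T^n; \ZZ/2)$-torsor of spin structures is affine: its linear part is determined by the induced $GL(n, \ZZ/2)$-action on $H^1$, and the translation part vanishes at the Lie-group point because the Lie-group description of $\mathfrak{s}$ is itself diffeomorphism-natural. Hence $\mathfrak{s}$ is fixed by every self-diffeomorphism of $T^n$, including $\phi|_L$, and the lemma follows.
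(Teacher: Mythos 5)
Your first two paragraphs match the paper's argument: composing with $\phi$ identifies $\mathcal{M}_1(J,\beta)$ with $\mathcal{M}_1(\phi_*J,\phi_*\beta)$ compatibly with evaluation and with de Silva's orientation scheme, and the whole lemma reduces to the single claim $(\phi|_L)_*\mathfrak{s} = \mathfrak{s}$. That reduction is correct and is exactly how the paper proceeds.

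The gap is in your justification of that last claim. You assert that the translation part of the affine action of $\pi_0\Diff(T^n)$ on the torsor of spin structures vanishes at $\mathfrak{s}$ ``because the Lie-group description of $\mathfrak{s}$ is itself diffeomorphism-natural''. It is not: the Lie-group framing is defined using the group structure of $T^n$, which an arbitrary diffeomorphism does not respect, so there is no tautological reason for $\phi|_L$ to fix $\mathfrak{s}$ --- and if there were, your discussion of the affine structure would be superfluous. What is tautological is only that translations and the linear maps in $\GL(n,\ZZ)$ fix $\mathfrak{s}$. To pass from these to a general $\phi|_L$ you must know that the action on spin structures depends only on the \emph{homotopy} class of the diffeomorphism; note that factoring through $\pi_0\Diff(T^n)$ is not enough, since $\pi_0\Diff(T^n) \to \GL(n,\ZZ)$ has nontrivial kernel for $n \geq 5$ and you would still owe an argument for that kernel. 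This homotopy invariance is the one genuinely non-formal point, and the paper supplies it by describing a spin structure as a homotopy class of nullhomotopy of $w_2 : L \to K(\ZZ/2,2)$ and observing that $w_2$ is defined homotopy-theoretically via the Wu classes, so that homotopic diffeomorphisms induce the same map on spin structures; since every diffeomorphism of $T^n$ is homotopic to a linear one, the directly checkable linear case then finishes the proof. Your argument needs this (or an equivalent) ingredient to close.
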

\begin{proof}
Composition with $\phi$ gives a bijection between discs contributing to $n_\beta$ with respect to $J$ and those contributing to $n_{\phi_*\beta}$ with respect to $\phi_*J$.  All that remains is to show that these discs contribute with the same sign, which amounts to showing that $\phi_* \mathfrak{s} = \mathfrak{s}$.

To do this, recall that a spin structure is a homotopy class of nullhomotopy of the second Stiefel--Whitney class $w_2 : L \to K(\ZZ/2, 2)$.  The Stiefel--Whitney classes can be defined purely homotopy-theoretically using the Wu classes of $L$, so homotopic diffeomorphisms induce the same map on spin structures.  Any diffeomorphism of $L$ is homotopic to a linear one, so it's left to show that linear diffeomorphisms of $L$ preserve $\mathfrak{s}$, and this can be seen directly.
\end{proof}

We thank Oscar Randal-Williams for suggesting the strategy of the second paragraph of this proof.


\subsection{Floer theory of monotone tori}
\label{sscFloer}

Next we review what is known about the Floer theory of $L$.  Again, this is all essentially standard, and follows from constructions and arguments in \cite{BiranCorneaRigidity,BiranCornea} to which more precise references are given at appropriate points.  Fix throughout this subsection a coefficient ring $R$ (associative, commutative, unital).  The case $R = \CC$ is the most often studied, but it will be important for us to consider other choices in order to probe arithmetic features of Floer algebras.

\begin{definition}
The \emph{superpotential} of $L$ is $W \in \ZZ[H_1(L)]$ defined by
\[
W = \sum_\beta n_\beta z^{\partial \beta},
\]
where $z$ is the formal variable whose exponent records the $H_1(L)$ class.  After choosing a basis of loops $\gamma_1, \dots, \gamma_n$ for $H_1(L)$, $W$ can be viewed as a Laurent polynomial in the $z_j \coloneqq z^{\gamma_j}$.  We think of $W$ as an $R$-valued function on the space $H^1(L; R^\times) = \operatorname{Hom}_{\ZZ} (H_1(L), R^\times) \cong (R^\times)^n$ of rank $1$ local systems $\calL$ over $R$ on $L$.  The monomial $z_j$ then records the monodromy of $\calL$ around $\gamma_j$.
\end{definition}

If $\calL_1$ and $\calL_2$ are two such local systems and satisfy $W(\calL_1) = W(\calL_2)$, then we can take the Floer cohomology $HF^*((L, \calL_1), (L, \calL_2))$, which is a $\ZZ/2$-graded $R$-module.  Letting \eqref{eqLcondition} denote the condition
\begin{equation}
\label{eqLcondition}
\tag{$\dag$}
\calL_1 = \calL_2 \text{ and their common value $\calL$ is a critical point of $W$},
\end{equation}
we have the following.

\begin{proposition}
\label{propHFadditive}
If \eqref{eqLcondition} holds then
\begin{equation}
\label{HFadditive}
HF^*((L, \calL_1), (L, \calL_2)) \cong H^*(L; R)
\end{equation}
as $\ZZ/2$-graded $R$-modules.  If \eqref{eqLcondition} fails and $R$ embeds in a finite-dimensional algebra over a field then \eqref{HFadditive} does not hold.
\end{proposition}
\begin{proof}
The ideas are all in \cite[Section 3.3]{BiranCornea}, but we briefly summarise the argument.  The Floer cohomology can be computed using Biran--Cornea's pearl complex $C^*_\mathrm{pearl}$ \cite{BiranCorneaRigidity}.  This starts with the Morse complex $C^*_\mathrm{Morse}(L; \calL_1^\vee \otimes \calL_2)$ associated to a Morse function $f$ on $L$, but then adds index-decreasing corrections to the differential, arising from pseudoholomorphic discs with boundary on $L$.

Since $L$ is a torus, we may take $f$ to be a perfect Morse function.  If $\calL_1 = \calL_2  =\calL$ then the Morse differential vanishes, and the Morse product on $C^*_\mathrm{Morse}(L; \calL_1^\vee \otimes \calL_2) = C^*_\mathrm{Morse}(L; R)$ can be corrected to a pearl product.  Pearl products of the index $1$ critical points span the whole complex, so by the Leibniz rule the pearl differential vanishes if and only if it vanishes on index $1$ critical points.  The latter is equivalent to $\calL$ being a critical point of $W$, so we deduce that \eqref{eqLcondition} implies \eqref{HFadditive}.

Now suppose that $R$ embeds in a finite-dimensional algebra $S$ over a field $\KK$.  If \eqref{eqLcondition} fails then the pearl differential is not identically zero, so
\[
\dim_\KK H^*(C^*_\mathrm{pearl} \otimes_R S) < \dim_\KK (C^*_\mathrm{pearl} \otimes_R S) = \dim_\KK H^*(L; S).
\]
By the universal coefficient theorem, the left-hand side is at least $\dim_\KK (HF^*((L, \calL_1), (L, \calL_2)) \otimes_R S)$.  On the other hand, the right-hand side is $\dim_\KK (H^*(L; R) \otimes S)$.  We conclude that \eqref{HFadditive} does not hold.
\end{proof}

\begin{remark}
If \eqref{eqLcondition} fails and $R$ is itself a field then in fact $HF^*((L, \calL_1), (L, \calL_2))$ vanishes.  We won't use this fact but it can be proved by considering the spectral sequence $C^*_\mathrm{Morse}(L; \calL_1^\vee \otimes \calL_2) \implies HF^*((L, \calL_1), (L, \calL_2))$ associated to the filtration of $C^*_\mathrm{pearl}$ by Morse index (see also \cite{OhSS}).  If $\calL_1 \neq \calL_2$ then the $E_0$-differential kills everything, whilst if $\calL_1 = \calL_2$ is not a critical point of $W$ then the $E_1$-differential kills everything.
\end{remark}

When \eqref{eqLcondition} holds, the isomorphism \eqref{HFadditive} is non-canonical, but there is a canonical inclusion
\begin{equation}
\label{eqPSS}
\operatorname{PSS} : H^{\leq 1}(L; R) \to HF^*((L, \calL), (L, \calL))
\end{equation}
arising from the inclusion of the critical points of index $\leq 1$ into the pearl complex.  There is also a ring structure (associative and unital) on $HF^*$, and $\operatorname{PSS}$ extends to a canonical $\ZZ/2$-graded $R$-algebra isomorphism
\begin{equation}
\label{eqClifford}
\Cl(H^1(L; R), -\tfrac{1}{2}\Hess_\calL W) \cong HF^*((L, \calL), (L, \calL)).
\end{equation}
Again, see \cite[Section 3.3]{BiranCornea} and references therein for proofs of these facts.

The left-hand side of \eqref{eqClifford} requires some explanation.  It denotes the Clifford algebra on $H^1(L; R)$ associated to the quadratic form $-\tfrac{1}{2}\Hess_\calL W$, where $\Hess_\calL W$ is the Hessian of the function $W$ at the point $\calL$.  The Hessian naturally lives on the tangent space $T_\calL H^1(L; R^\times)$, but we identify this with $H^1(L; R)$ by using the translation action of $H^1(L; R^\times)$ on itself to send $\calL$ to the identity, then using the embedding of $H^1(L; R^\times)$ in $H^1(L; R)$.  The coefficients of $\Hess_\calL W$ are always even when $\calL$ is a critical point of $W$, so it makes sense to multiply it by $-\tfrac{1}{2}$ even if $2$ is not invertible in $R$.  Explicitly, suppose we
\begin{equation}
\label{eqCoords}
\begin{aligned}
&\text{fix a basis $\gamma_1, \dots, \gamma_n$ for $H_1(L)$,}
\\ & \text{let $v_1, \dots, v_n$ be the dual basis for $H^1(L)$, and}
\\ & \text{let $z_1, \dots, z_n$ be the associated coordinates on $H^1(L; R^\times)$.}
\end{aligned}
\end{equation}
If $\calL$ has components $\rho_j$ with respect to the coordinates $z_j$ then $v_j$ corresponds to the tangent vector $\rho_j \partial_{z_j}$.  In the Clifford algebra we then have
\begin{equation}
\label{eqCliffordRelations}
v_i^2 = -\frac{\rho_j^2}{2} \left.\frac{\partial^2 W}{\partial z_j^2}\right|_\calL \quad \text{and} \quad v_iv_j + v_jv_i = -\rho_i\rho_j \left.\frac{\partial^2 W}{\partial z_i \partial z_j}\right|_\calL.
\end{equation}
Since we are evaluating at the critical point $\calL$, we can replace each $\rho_j \partial/\partial z_j$ with $z_j \partial/\partial z_j$ if we wish.

\begin{lemma}[{Obvious generalisation of \cite[Proposition 5.4]{Ono}}]
\label{lemLocalSystemAction}
If $\calL$ is a critical point of $W$, and $R$ embeds in a finite-dimensional algebra over a field, then $\HLM$ fixes $\calL$ under its induced action on $H^1(L; R^\times)$.
\end{lemma}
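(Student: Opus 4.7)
The plan is to combine Hamiltonian isotopy invariance of Floer cohomology with local systems with the characterisation of non-vanishing given by~\eqref{HFadditive}, directly generalising Ono's argument. First I would identify $H^1(L; \CC^*) \cong \operatorname{Hom}(H_1(L), \CC^*)$, so that the action of $\HLM$ on this space is dual to its action on $H_1(L)$; concretely, $\phi \in \Ham(X, L)$ acts by pushforward $\calL \mapsto \phi_*\calL$ of local systems. Note that \cref{lemDiscCountInvariance} already tells us $W$ is $\HLMS$-invariant, so $\HLM$ permutes the critical points of $W$; what needs extra input is that each critical point is actually \emph{fixed}.

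Given $\phi \in \Ham(X, L)$ together with a Hamiltonian isotopy $\{\phi_t\}_{t\in[0,1]}$ in $X$ from $\phi_0 = \id$ to $\phi_1 = \phi$, I would consider the one-parameter family of Lagrangian-brane pairs $(\phi_t(L), (\phi_t)_*\calL)$ obtained by transporting $\calL$ along the isotopy. At $t = 0$ this is $(L, \calL)$, and at $t = 1$ it is $(L, \phi_*\calL)$. Hamiltonian isotopy invariance of Floer cohomology, applied to the first entry alone, then produces a canonical isomorphism
\[
HF^*\bigl((L, \calL), (L, \calL)\bigr) \;\cong\; HF^*\bigl((L, \phi_*\calL), (L, \calL)\bigr).
\]

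The hypothesis that $\calL$ is a critical point of $W$ forces the left-hand side to be isomorphic to $H^*(L; \CC)$ via~\eqref{HFadditive}, and in particular non-zero. Hence the right-hand side is non-zero, and applying~\eqref{HFadditive} in the reverse direction to the right-hand side forces $\phi_* \calL = \calL$. This is precisely the statement that the induced action of $\HLM$ on $H^1(L; \CC^*)$ fixes $\calL$.

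The main obstacle is really just a matter of bookkeeping: one must check that the one-sided Hamiltonian isotopy, carrying $\calL$ by parallel transport, does indeed induce a Floer-theoretic isomorphism compatible with the local system data. This is a standard input underlying the construction of $HF^*$ for brane pairs cited in the discussion around~\eqref{HFadditive}. A direct alternative, if one wants to avoid parametrised arguments, is to apply the diffeomorphism $\phi$ simultaneously to both entries (which is trivially an isomorphism by equivariance of Floer theory under symplectomorphism) and then use the inverse Hamiltonian isotopy on the second entry alone to restore it to $(L, \calL)$, yielding the same conclusion.
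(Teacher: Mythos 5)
Your proposal is correct and follows essentially the same route as the paper: invoke Hamiltonian invariance of Floer cohomology to identify $HF^*((L,\calL),(L,\calL))$ with the Floer cohomology of the pair in which one copy of $(L,\calL)$ has been replaced by its image under $\phi$, then use the dichotomy~\eqref{HFadditive} to conclude that non-vanishing forces $\phi_*\calL = \calL$. The only cosmetic difference is which slot carries the transported local system, which does not affect the argument.
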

\begin{proof}
The Floer cohomology of two Lagrangians is unchanged if either of the Lagrangians is modified by a Hamiltonian diffeomorphism.  This means, in particular, that if $\phi \in \Ham(X, L)$ then
\begin{equation}
\label{HFcomparison}
HF^*((L, \calL), (L, \phi_*\calL)) \cong HF^*((L, \calL), (L, \calL))
\end{equation}
for all rank $1$ local systems $\calL$.  Comparing with \cref{propHFadditive} we see that if $\calL$ is a critical point of $W$ then $\phi_*$ must fix $\calL$.
\end{proof}

\begin{remark}
\label{rmkBranes}
To define $\ZZ/2$-graded Floer cohomology groups over $\CC$ we really need to keep track of an orientation (equivalent to a $\ZZ/2$-grading in the sense of \cite{SeidelGraded}) and spin structure on each Lagrangian.  We should therefore write $(L, \calL)$ as $(L, o_L, \mathfrak{s}, \calL)$, where $o_L$ is an arbitrary choice of orientation and $\mathfrak{s}$ is the standard spin structure.  The $(L, \phi_*\calL)$ appearing in \eqref{HFcomparison} should then be
\[
(L, \phi_*o_L, \phi_*\mathfrak{s}, \phi_*\calL).
\]
As in \cref{lemDiscCountInvariance}, $\phi_*\mathfrak{s}$ coincides with $\mathfrak{s}$, so we can safely suppress it in our notation.  If $\phi$ is orientation-preserving on $L$ then we can similarly forget the $\phi_*o_L$, but if it's orientation-reversing then the difference between $\phi_*o_L$ and $o_L$ corresponds to a grading shift of $1$.  The upshot is that we can (and will) always implicitly work with $o_L$ and $\mathfrak{s}$, but must remember that if $\phi|_L$ is orientation-reversing then \eqref{HFcomparison} has odd degree.
\end{remark}


\subsection{Proof of \cref{Theorem1}}

Recall \cref{Theorem1} asserts that
\begin{enumerate}
\item\label{Theorem1bitm1} The action of $\HLMS$ on $H_1(L)$ permutes the elements of $B_1$.
\item\label{Theorem1bitm2} If $r = n$ then the induced map $\HLMS \to \Sym(B_1)$ is injective, so $\HLMS$ is finite.
\item\label{Theorem1bitm3} If $r = 0$ then $\HLM$ is trivial.
\end{enumerate}
Here $B_1 \subset H_1(L)$ denotes the set of boundaries of classes $\beta \in H_2(X, L)$ with $n_\beta \neq 0$, and $r$ is the rank of the rational span of $B_1$ in $H_1(L; \QQ)$.

Part \ref{Theorem1bitm1} follows immediately from \cref{lemDiscCountInvariance}.  Part \ref{Theorem1bitm2} is then simply linear algebra: a linear automorphism of $H_1(L)$ is determined by its action on $H_1(L; \QQ)$, which is in turn determined by its action on a spanning set.  For \ref{Theorem1bitm3}, note that if $r=0$ then $W$ is constant so every local system $\calL$ over $\CC$ is a critical point.  \Cref{lemLocalSystemAction} then tells us that $\HLM$ acts trivially on $H^1(L; \CC^*)$ and hence on $H_1(L)$.


\section{Monotone toric fibres}
\label{secToric}


\subsection{Toric geometry background}
\label{sscToricBackground}

In this subsection we briefly review the necessary toric geometry and fix notation.  None of this is original; see for example \cite[Part XI]{CannasDaSilva} and references therein.

Let $\mathfrak{t}$ be the Lie algebra of an abstract $n$-torus $T$.  This contains a lattice $A$, given by the kernel of the exponential map.  We write $\mathfrak{t}^\vee$ for the dual space of $\mathfrak{t}$ and $\langle \cdot, \cdot \rangle$ for the duality pairing.

\begin{definition}
\label{defPolytope}
A \emph{Delzant polytope} is a compact subset $\Delta \subset \mathfrak{t}^\vee$ of the form
\begin{equation}
\label{eqDelta}
\Delta = \{x \in \mathfrak{t}^\vee : \langle x, \nu_j \rangle \geq -\lambda_j \text{ for } j=1, \dots, N\},
\end{equation}
where $\nu_1, \dots, \nu_N$ are elements of $A$ and $\lambda_1, \dots, \lambda_N$ are real numbers, such that:
\begin{itemize}
\item Exactly $k$ facets (codimension-$1$ faces) of $\Delta$ meet at each codimension-$k$ face.
\item Wherever $k$ facets meet, the corresponding normals $\nu_j$ extend to a $\ZZ$-basis for the lattice $A$.
\end{itemize}
We assume that $\Delta$ is non-empty and that none of the inequalities on the right-hand side of \eqref{eqDelta} is redundant, so $\Delta$ has exactly $N$ facets.
\end{definition}

Given a Delzant polytope $\Delta$, we define the associated toric manifold $X$ as follows.  Take $\CC^N$ with coordinates $(w_1, \dots, w_N)$ and symplectic form
\[
\frac{i}{2} \sum_{j=1}^N \diff w_j \wedge \diff \overline{w}_j.
\]
This carries an action of $T^N$ with moment map
\[
\mu_{T^N}(w) = \Big(\frac{1}{2}\lvert w_1 \rvert^2 - \lambda_1, \dots, \frac{1}{2}\lvert w_N \rvert^2 - \lambda_N\Big).
\]
Now let $K \subset \ZZ^N$ be the space of linear relators between the $\nu_j$, i.e.~the kernel of the linear map $\ZZ^N \to A$ sending the $j$th basis vector to $\nu_j$, and let $T_K$ be the subtorus $(K \otimes \RR) / K$ of $T^N = \RR^N/\ZZ^N$.  (This really is a subtorus of $T^N$ since the Delzant condition on $\Delta$ forces the sublattice $K \subset \ZZ^N$ to be primitive.)  This subtorus acts on $\CC^N$, with moment map $\mu_{T_K}$ given by
\[
\mu_{T_K}(w)(\xi) = \xi^T\mu_{T^N}(w)
\]
for $\xi$ in the Lie algebra $K \otimes \RR \subset \RR^N$ of $T_K$.

\begin{definition}
$X$ is the symplectic reduction of $\CC^N$ with respect to this $T_K$-action, at the zero level of the moment map $\mu_{T_K}$.  The conditions on $\Delta$ ensure that the $T_K$-action is free on $\mu_{T_K}^{-1}(0)$, so the reduced space is indeed smooth.
\end{definition}

The $T^N$-action on $\CC^N$ descends to a Hamiltonian action of $T^N / T_K$ on $X$.  Note that $\ZZ^N / K$ is canonically identified with $A$, so $T^N / T_K$ is identified with our original abstract torus $T$, whose Lie algebra is $\mathfrak{t}$.  Thus $X$ carries a Hamiltonian $T$-action, and the image of its moment map $\mu_T : X \to \mathfrak{t}^\vee$ is exactly $\Delta$.  We write $F_j$ for the $j$th facet of $\Delta$, obtained by replacing the $j$th inequality of \eqref{eqDelta} with an equality.  The preimages $D_j$ of the $F_j$ under $\mu_T$ are the \emph{toric divisors}.

\begin{definition}
A \emph{Lagrangian toric fibre} is a fibre $L = \mu_T^{-1}(p)$ of the moment map over a point $p$ in the interior of $\Delta$.  It is a free Lagrangian $T$-orbit.  By translating $\Delta$ we may assume that $p = 0$, and then $L$ is monotone if and only if all $\lambda_j$ are equal to some common value $\lambda$.  We will assume that this holds from now on.
\end{definition}

Since $L$ is a free $T$-orbit, we have identifications
\[
H_1(L) = H_1(T) = \ker (\exp : \mathfrak{t} \to T) = A.
\]
The toric divisors $D_1, \dots, D_N$ form a basis for $H_{2n-2}(X, L)$, and the dual basis elements for $H_2(X, L)$ are called \emph{basic classes} and denoted $\beta_1, \dots, \beta_N$.  The boundary of $\beta_j$ is identified with $\nu_j \in A$.

Each $\beta_j$ is represented by a holomorphic disc $u_j$ with respect to the standard complex structure, that intersects $D_j$ once, positively, and is disjoint from the other $D_l$.  Cho and Cho--Oh \cite{Cho, ChoOh} showed that the $u_j$ are the only holomorphic index $2$ discs with boundary on $L$, up to translation by $T$.  They also showed that these discs are regular (i.e.~the standard complex structure is regular in the sense of \cref{sscDiscCounts}) and count positively with respect to the standard spin structure.  So we get
\begin{equation}
\label{eqToricnbeta}
n_\beta = \begin{cases} 1 & \text{if $\beta = \beta_j$ for some $j$} \\ 0 & \text{otherwise}.\end{cases}
\end{equation}
Hence $B_1 = \{\nu_1, \dots, \nu_N\}$, which spans $H_1(L)$.  Note that the $\nu_j$ are all distinct, so it is unambiguous to talk about how $\HLM$ or $\HLMS$ permutes the $\nu_j$.


\subsection{Proof of \cref{Theorem2} for compact $X$}
\label{sscToricProof}

By \cref{Theorem1} we can view $\HLM$ and $\HLMS$ as subgroups of the symmetric group $S_N$ on the $\nu_j$, and the task is to show that they are precisely the subgroups that fix $K$ pointwise and setwise respectively.

The groups $\Ham(X, L)$ and $\Symp_\infty(X, L)$ naturally act linearly on $H_2(X, L)$, where they permute the $\beta_j$ by \cref{lemDiscCountInvariance} and \eqref{eqToricnbeta}.  The map $H_{>0}(L) \to H_{>0}(X)$ vanishes, so we get a short exact sequence
\[
0 \to H_2(X) \to H_2(X, L) \xrightarrow{\ \partial\ } H_1(L) \to 0,
\]
under which the linear action on $H_2(X, L)$ and permutation action on the $\beta_j$ induce the linear action on $H_1(L)$ and permutation action on the $\nu_j$ that we care about.  Since the $\beta_j$ form a basis for $H_2(X, L)$, the relators $K$ between the $\nu_j$ correspond precisely to the kernel of the boundary map $H_2(X, L) \to H_1(L)$, and hence to the image of $H_2(X)$ in $H_2(X, L)$.  Another consequence of the $\beta_j$ forming a basis is that every permutation of them corresponds to a unique linear endomorphism of $H_2(X, L)$, so it makes sense to talk about how such a permutation acts on $H_2(X, L)$.  In light of these observations, \cref{Theorem2} is reduced to showing that the action of $\HLMS$ on the $\beta_j$ realises precisely those permutations fixing the image of $H_2(X)$ in $H_2(X, L)$ setwise, whilst the action of $\HLM$ realises precisely those permutations fixing the image of $H_2(X)$ pointwise.

To see that all permutations arising from $\HLMS$ fix the image of $H_2(X)$ setwise simply note that the short exact sequence above is equivariant with respect to the action of $\Symp_\infty(X, L)$.  To see moreover that those arising from $\HLM$ fix the image of $H_2(X)$ pointwise, observe that every $\phi \in \Ham(X, L)$ is isotopic to the identity as a diffeomorphism of $X$, so acts trivially on $H_*(X)$.  It therefore remains to show that all of the permutations allowed by these constraints can be realised.

First we deal with $\HLMS$, so let $\sigma \in S_N$ be a permutation fixing $K$ setwise.  It induces a linear automorphism $\theta$ of $K$, and hence an automorphism $f$ of the torus $T_K$.  It also acts on $\CC^N$ by the symplectomorphism $\phi$ given by permuting the factors.  Note that $\phi$ preserves the torus $L_{\CC^n} = \mu_{T^N}^{-1}(0)$ setwise, and acts on
\[
H_1(L) = H_1(L_{\CC^n}/T_K) = \Big(\bigoplus_{j=1}^N \ZZ \nu_j\Big) / K
\]
by permuting the $\nu_j$ according to $\sigma$.  It thus suffices to show that $\phi$ descends to a symplectomorphism of $X$, which is a consequence of the following two facts:
\begin{itemize}
\item $\phi$ preserves $\mu_{T_K}^{-1}(0)$ setwise, because if $w$ is a point in this set then for all $\xi \in K$ we have
\[
\mu_{T_K}(\phi(w))(\xi) = \xi^T\mu_{T^N}(\phi(w)) = \theta^{-1}(\xi)^T \mu_{T^N}(w) = \mu_{T_K}(w)(\theta^{-1}(\xi)) = 0.
\]
\item Quotienting this level set by $T_K$ commutes with $\phi$, because for all $w \in \CC^N$ and all $g \in T_K$ we have $\phi(g\cdot w) = f(g)\cdot \phi(w)$.
\end{itemize}

Now we deal with $\HLM$, so let $\sigma \in S_N$ be a permutation fixing $K$ pointwise, and let $\phi$ denote the action of $\sigma$ on $\CC^N$ by permuting the factors.  This $\phi$ induces the desired action on $H_1(L)$, as in the case of $\HLMS$, so it suffices to show that $\phi$ descends to a Hamiltonian diffeomorphism of $X$.  To do this, we'll construct a Hamiltonian action of a connected Lie group $G$ on $\CC^N$ such that $\phi$ can be realised by an element of $G$, and then show that this $G$-action descends to a Hamiltonian action on $X$.

To construct the action on $\CC^N$, first we partition the $\nu_j$ by the equivalence relation that says $\nu_i \sim \nu_j$ if and only if $\nu_i$ and $\nu_j$ appear with the same coefficient in every element of $K$.  Then $\sigma$ preserving $K$ pointwise is equivalent to it only permuting the $\nu_j$ within their parts of the partition.  By reordering the $\nu_j$, we may assume that the partition is
\[
\{\nu_1, \dots, \nu_{N_1}\}, \{\nu_{N_1+1}, \dots, \nu_{N_1+N_2}\}, \dots, \{\nu_{N-N_k+1}, \dots, \nu_N\},\]
so $\sigma$ lies in $S_{N_1} \times S_{N_2} \times \dots \times S_{N_k}$.

\begin{remark}
\label{rmkConjectureToric}
The number of pieces $k$ of the partition is at least the rank of $K$, which is $N-n$, so we have
\[
\sum_{j=1}^k (N_j - 1) = N - k \leq n.
\]
So case \ref{conjitm2} of \cref{conjHighDim} holds in this case.
\end{remark}

Let $G = \U(N_1) \times \dots \times \U(N_k)$ be the block-diagonal subgroup of $\U(N)$, acting on $\CC^N$ in the obvious way. Since $\sigma$ preserves the partition, $\phi$ can be realised by an element of $G$, with each block a permutation matrix.  Moreover, this $G$-action is Hamiltonian, with moment map
\[
\mu_G(w)(\xi) = \frac{i}{2}w^\dag \xi w
\]
for all $w \in \CC^N$ and all $\xi \in \mathfrak{g}$.  Here we are thinking of $\xi$ as a skew-Hermitian block-diagonal matrix, and ${}^\dag$ denotes conjugate transpose.

It now remains to show that this $G$-action descends to a Hamiltonian action on $X$.  For this, note that the action of $T_K$ on $\CC^N$ is by block-diagonal (in fact, diagonal) unitary matrices, with the same block sizes as $G$, and by definition of the partition each block is a scalar matrix.  Thus the action of $T_K$ commutes with every $\xi$ in $\mathfrak{g}$, and hence preserves $\mu_G$.  This forces the $G$- and $T_K$-actions to commute, and forces $G$ to preserve $\mu_{T_K}$---this is essentially Noether's theorem---which means that the $G$-action descends as claimed.


\subsection{Extension to non-compact $X$}
\label{sscNonCompactToric}

Now suppose we remove the compactness condition on $\Delta$ in \cref{defPolytope}.  Instead we ask that $\Delta$ has at least one vertex.

\begin{remark}
This does not produce all non-compact toric varieties, only those that can be constructed as symplectic reductions (or GIT quotients) of affine space.  In particular, it excludes things like $\CC^*$, where the presence of non-trivial $H_1(X)$ breaks our description of $H_2(X, L)$, and $\CC\PP^2$ minus a toric fixed point, where holomorphic curves can escape into the deleted point.
\end{remark}

The construction of $X$, and permutation descriptions of $\HLM$ and $\HLMS$, then go through as above except for the following modification.  $X$ carries a standard complex structure $J_0$, and we restrict attention to almost complex structures $J$ that agree with $J_0$ outside a compact set.  We then take $\Symp_\infty(X, L)$ to be those symplectomorphisms $\phi$ that preserve $L$ setwise and are $J_0$-holomorphic outside a compact set, since these $\phi$ preserve the class of allowed $J$.  Note that the symplectomorphisms of $X$ induced by permutations of the factors in $\CC^N$ have this property, so the construction of elements in $\HLMS$ in the previous subsection remains valid.  We can also use our earlier construction of elements in $\HLM$, and simply cut off the generating Hamiltonians outside a large compact set containing the sweepout of $L$.

It remains to explain why a generic choice of such $J$ is regular in the sense of \cref{sscDiscCounts}, and why using such $J$ prevents pseudoholomorphic curves from escaping to infinity.  The former holds since within this class of almost complex structures we are free to perturb $J$ on a neighbourhood of $L$, through which any disc with boundary on $L$ must pass.  For the latter, meanwhile, note that $X$ has an affinisation $X_{\mathrm{aff}}$, given by the spectrum of the ring of global ($J_0$-)holomorphic functions on $X$.  This comes with a ($J_0$-)holomorphic map $\pi : X \to X_{\mathrm{aff}}$, and in our toric setting this map is projective---see \cite[Lemma 4.2]{SmithQH} for example.  Using this, we can prove the following.

\begin{lemma}
For any compact set $V \subset X$ there exists a compact set $W \subset X$ such that: for any almost complex structure $J$ on $X$ that agrees with $J_0$ outside $V$, any $J$-holomorphic disc $u : (D^2, \partial D^2) \to (X, L)$ is contained in $W$.
\end{lemma}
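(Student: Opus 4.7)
The plan is to exploit the projective map $\pi \colon X \to X_{\mathrm{aff}}$ to reduce the problem to bounding $\pi \circ u$ inside an affine variety, where boundedness is detected by global holomorphic functions. Since $X_{\mathrm{aff}}$ is affine of finite type (its coordinate ring is the ring of $T_K$-invariants in the polynomial ring on $\CC^N$, which is finitely generated), it admits a closed embedding into some $\CC^M$ via finitely many holomorphic functions $f_1, \dots, f_M \in \Gamma(\mathcal{O}_X)$, and any subset of $X_{\mathrm{aff}}$ on which every $|f_i|$ is bounded is relatively compact there.

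For each $i$, I would consider $g_i = f_i \circ \pi \circ u \colon D^2 \to \CC$ and show that it is holomorphic on the open set $U = D^2 \setminus u^{-1}(V)$. On $U$ the disc $u$ maps into $X \setminus V$, where $J = J_0$, so $u|_U$ is $J_0$-holomorphic; since $\pi$ and $f_i$ are also $J_0$-holomorphic, a routine chain-rule check then gives that $g_i$ is holomorphic on $U$ in the ordinary sense. Consequently $|g_i|$ is continuous on $D^2$ and subharmonic on $U$.

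The next step is the maximum principle: on each connected component of $U$, the supremum of $|g_i|$ is attained on the topological boundary of that component in $D^2$, which is contained in $\partial D^2 \cup u^{-1}(V)$. Since $u(\partial D^2) \subset L$ and $u(u^{-1}(V)) \subset V$, this gives
\[
\sup_{D^2} |g_i| \leq \max\bigl(\sup_L |f_i \circ \pi|,\ \sup_V |f_i \circ \pi|\bigr) =: C_i,
\]
a finite constant depending only on $V$, $L$ and the $f_i$, not on $J$ or $u$. Hence $\pi(u(D^2))$ is contained in the compact subset $B = \{p \in X_{\mathrm{aff}} : |f_i(p)| \leq C_i \text{ for all } i\}$ of $X_{\mathrm{aff}}$; setting $W = \pi^{-1}(B)$ then yields a compact subset of $X$, by properness of the projective morphism $\pi$, with the desired property.

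The main thing to verify carefully is the compatibility in the first step: that $J$-holomorphy of $u$ over the region where $J$ agrees with $J_0$, combined with $J_0$-holomorphy of $\pi$ and $f_i$, really does produce an honest holomorphic $g_i$ on $U$ in the standard complex structure on $\CC$. Everything else is standard machinery -- affineness of $X_{\mathrm{aff}}$, the subharmonic maximum principle, and properness of $\pi$ -- so no real obstacle is expected beyond assembling these ingredients correctly.
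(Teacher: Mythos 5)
Your proposal is correct and follows essentially the same route as the paper: embed the affinisation $X_{\mathrm{aff}}$ in affine space, observe that the coordinate functions composed with $\pi \circ u$ are honestly holomorphic wherever $J = J_0$, bound them via their values on $L$ and $V$, and pull back a compact polydisc region under the proper map $\pi$. The only cosmetic difference is that you invoke the maximum principle for $|g_i|$ on the components of $D^2 \setminus u^{-1}(V)$, whereas the paper argues at a global maximum point via the open mapping theorem; both hinge on exactly the same holomorphy observation that you correctly flag as the point needing care.
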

\begin{proof}
Fix an embedding of $X_{\mathrm{aff}}$ in an affine space $\CC^m$, and let $z_1, \dots, z_m$ be the corresponding coordinate functions.  Let $r$ be the maximum of $\lvert z_1 \rvert, \dots, \lvert z_m \rvert$ over $\pi(V \cup L)$, and define
\[
W = \pi^{-1}(\{z \in X_{\mathrm{aff}} : \lvert z_j \rvert \leq r \text{ for all $j$}\}).
\]
This is compact since $\pi$ is projective and hence proper.  Note also that $L \subset \pi(W)$.

Now suppose $u$ is a $J$-holomorphic disc that escapes $W$---say the $j$th component of $\pi \circ u$ exceeds $r$ somewhere.  Let $f$ denote $z_j \circ \pi \circ u$, and let $p \in D^2$ be a point at which $\lvert f \rvert$ attains its maximum.  Then $f$ is holomorphic, in the ordinary sense, on a neighbourhood of $p$, since $J = J_0$ near $u(p)$.  However, $f$ is not open near $p$ since it lands in the closed disc of radius $\lvert f(p) \rvert$.  This forces $f$ to be constant, but this is impossible since $u(\partial D^2) \subset L \subset W$.  So no such $u$ can exist.
\end{proof}


\section{Further constraints}
\label{secFurtherConstraints}


\subsection{Continuation elements}

In this subsection we return to studying the Floer theory of $L$ and derive an additional constraint on $\HLM$ that will be used later.

A compactly supported symplectomorphism $\phi$ of $X$ induces a pushforward homomorphism
\[
\Phi : HF^*(L_1, L_2) \to HF^*(\phi(L_1), \phi(L_2))
\]
for any Lagrangians $L_1$ and $L_2$ for which Floer cohomology can be defined.  Assume that there exists a compactly supported Hamiltonian isotopy $\phi_t$ from the identity to $\phi$.  This induces a continuation element $c_j \in HF^0(L_j, \phi(L_j))$ for each $j$, such that for $a \in HF^*(L_1, L_2)$ we have
\begin{equation}
\label{eqContinuation}
\Phi(a) c_1 = c_2 a.
\end{equation}
We are following Seidel's sign conventions from \cite[Equation (1.8)]{SeidelBook} (with $d=1$, $\mathscr{F}_0^1 = \id$, $\mathscr{F}_1^1 = \Phi$, and $T^0 = c_\bullet$) and \cite[Equation (1.3)]{SeidelBook}; see also \cite[Section (10c)]{SeidelBook}.  We are also suppressing orientations, spin structures, and local systems in our notation.

Now consider the case where $L_1$ and $L_2$ are both equal to our monotone torus $L$, equipped with an arbitrary orientation, the standard spin structure $\mathfrak{s}$, and a local system $\calL$ over a ring $R$ that is a critical point of the superpotential $W$.  Suppose that $\phi$ preserves $L$ setwise.  Then by \cref{lemLocalSystemAction} and \cref{rmkBranes} we can identify $\phi(L, \calL)$ with $(L[d], \calL)$, where $[d]$ denotes a grading shift of $d$ mod $2$ and $d=0$ or $1$ if $\phi|_L$ is orientation-preserving or -reversing respectively.  We can thus think of $c_1 = c_2$ as an element $c$ of $HF^d((L, \calL), (L, \calL))$, and consider the composition
\[
\Phi' : HF^*((L, \calL), (L, \calL)) \xrightarrow{\ \Phi\ } HF^*((L[d], \calL), (L[d], \calL)) \xrightarrow{\ S\ } HF^*((L, \calL), (L, \calL)),
\]
where $S$ is the shift isomorphism $a \mapsto (-1)^{d\lvert a \rvert}a$.  This map $\Phi'$ is intertwined with the classical pushforward $\phi_* = (\phi^*)^{-1} : H^*(L; R) \to H^*(L; R)$ by the PSS map \eqref{eqPSS}.  The element $c$ is invertible---its inverse is the continuation element associated to the reverse Hamiltonian isotopy---so \eqref{eqContinuation} becomes
\[
\Phi'(a) = (-1)^{d\lvert a \rvert} c a c^{-1}.
\]

The upshot of this discussion is the following.

\begin{lemma}
\label{lemConjugation}
For each $g \in \HLM$, and each critical point $\calL$ of $W$ over $R$, the dual action of $g$ on $H^1(L; R)$ corresponds under the PSS map to $(-1)^d$ times conjugation by an element $c \in HF^d((L, \calL), (L, \calL))$.  Here $d$ is $0$ if $g$ is orientation-preserving and $1$ otherwise.\hfill$\qed$
\end{lemma}

A version of this idea was used by Varolgunes in \cite{Varolgunes} to constrain Hamiltonian actions on a Lagrangian nodal sphere, and we are grateful to him for explaining it to us.

\begin{remark}
The argument of Hu--Lalonde--Leclercq \cite{HuLalondeLeclercq} can be phrased in this language, as follows.  For weakly exact $L$ the PSS map extends to a canonical ring isomorphism $H^*(L) \to HF^*(L, L)$, and hence:
\begin{itemize}
\item The action of $g \in \HLM$ on $H^*(L)$ is determined by its action on $HF^*(L, L)$.
\item The action on $HF^*(L, L)$ is trivial since it is given by $a \mapsto (-1)^{d\lvert a \rvert} c a c^{-1}$ for some $c$, but the ring is graded commutative.
\end{itemize}
(When $L$ is not a torus, one has to be careful about how $\HLM$ acts on spin structures, or work in characteristic $2$ so that this is irrelevant.)  It was pointed out to us by Jake Solomon that this argument could be extended to other settings where $HF^*(L, L)$ is known to be graded commutative, e.g.~\cite{FOOOinvolution,Pascaleff,Solomon}, and where the PSS map allows the action on $H^*(L)$ to be deduced from the action on $HF^*(L, L)$.
\end{remark}


\subsection{$1$-eigenspaces in $\HLM$}

The other constraint that we introduce is based on a method for detecting critical points of $W$.  So fix a coefficient ring $R$, choose bases and coordinates as in \eqref{eqCoords}, and identify tangent spaces of $H^1(L; R^\times)$ with $H^1(L; R)$ as described in the same place.  Given $g \in \HLM$ let $\psi_g$ denote the automorphism of $H^1(L; R^\times)$ it induces, defined by $z^\gamma(\psi_g(\calL)) = z^{g(\gamma)}(\calL)$ for all monomials $z^\gamma$ and all local systems $\calL$.  Under our tangent space identifications the derivative of $\psi_g$ becomes the dual map $g^\vee : H^1(L; R) \to H^1(L; R)$.

\begin{lemma}
\label{lemKerIntersect}
If $g_1, \dots, g_k$ are elements of $\HLM$ such that
\begin{equation}
\label{eqKerIntersect}
\bigcap_{j=1}^k \ker_R(g_j - I) = 0,
\end{equation}
where $\ker_R(g_j - I)$ denotes the kernel of $g_j - I$ acting on $H_1(L; R)$, then any simultaneous fixed point $\calL \in H^1(L; R^\times)$ of the $\psi_{g_j}$ is a critical point of the superpotential $W$.
\end{lemma}
\begin{proof}
Suppose $\calL$ is a simultaneous fixed point.  By $\HLM$-invariance of the superpotential $W$ we have $W \circ \psi_{g_j} = W$ for all $j$.  Differentiating, and using our tangent space identifications, we get
\[
\diff_\calL W \circ g_j^\vee = \diff_\calL W
\]
for all $j$.  This equality can be rewritten as $(g_j^{\vee\vee}-I)\diff_\calL W = 0$, so under the double-duality isomorphism $\diff_\calL W$ lies in the intersection of the kernels of the $g_j - I$.  If \eqref{eqKerIntersect} holds then we conclude that $\diff_\calL W = 0$, i.e.~$\calL$ is a critical point of $W$.
\end{proof}

\begin{example}
\label{exMinusId}
If $-I$ is in $\HLM$ then, with respect to any basis of $H_1(L)$, every local system in $\{\pm 1\}^n$ is a critical point of $W$.
\end{example}

\Cref{lemLocalSystemAction} then gives the following.

\begin{corollary}
\label{corEigenspaces}
If $g_1, \dots, g_k \in \HLM$ satisfy \eqref{eqKerIntersect}, and if $R$ embeds in a finite-dimensional algebra over a field, then every $\calL$ fixed by the $\psi_{g_j}$ is fixed by $\psi_g$ for all $g \in \HLM$.\hfill$\qed$
\end{corollary}


\section{Classification for $n=2$}
\label{secn2}

Throughout this section we assume that the dimension $n$ of our monotone torus $L \subset X$ is $2$.  We also assume that the rank $r$ of the rational span of $B_1 = \{\partial \beta : n_\beta \neq 0\} \subset H_1(L)$ is greater than $0$.  Our main goal is to prove \cref{Theorem3,Theorem4}, which give strong constraints on $\HLM$ in this dimension.  The section ends with a short discussion of a different but closely related question: given $\HLM$, what can be said about the Floer theory of $L$?


\subsection{Proof of \cref{Theorem3}}

Suppose that $X$ is symplectically aspherical ($\omega$ vanishes on $\pi_2(X)$) and simply connected.  The latter ensures that any loop $\gamma \in \pi_1(L) = H_1(L)$ can be capped off by a disc $\beta \in \pi_2(X, L)$, and the former then tells us that any two cappings have the same area.  It thus makes sense to talk about the area, or equivalently (by monotonicity) the Maslov index, of any capping of a loop $\gamma$.  Let $\overline{\mu} : H_1(L) \to \ZZ$ denote this induced Maslov index homomorphism.  The set $\overline{\mu}^{-1}(2)$ is non-empty, since it contains $B_1$, and is a torsor for $\ker \overline{\mu} \cong \ZZ$.  \cref{Theorem3} states that $\HLMS$ naturally embeds in the infinite dihedral group on this set, and that the image is either trivial or generated by a single involution.

The first part of the statement follows immediately from the fact that $\HLMS$ preserves $\overline{\mu}$.  To prove the second part, note that by \cref{lemDiscCountInvariance} the action of $\HLMS$ on $\overline{\mu}^{-1}(2)$ preserves $B_1$ setwise.  Since $B_1$ is finite, by Gromov compactness, this prevents the action from containing any translations of $\overline{\mu}^{-1}(2)$.  The action is therefore either trivial or generated by a single involution.

\begin{example}
The hypotheses are satisfied for the monotone Lagrangian \emph{matching torus} $\mathbb{T}_p$ in the $A_{p-1}$ Milnor fibre, constructed by Lekili--Maydanskiy in \cite{LekiliMaydanskiy}.  Here $p$ is any positive integer, and Lekili and Maydanskiy show that $\mathbb{T}_1$ and $\mathbb{T}_2$ recover the Clifford torus in $\CC^2$ and the Polterovich torus in $T^*S^2$ \cite{AlbersFrauenfelder} respectively.  They also construct, for coprime positive integers $p$ and $q$ with $p > q$, a Stein surface $B_{p, q}$ containing a monotone Lagrangian torus $\mathbb{T}_{p, q}$.  These arise from quotienting the $A_{p-1}$ Milnor fibre and the torus $\mathbb{T}_p$ by a certain action of $\ZZ/p$, which depends on $q$.  Topologically $B_{p,q}$ is a rational homology ball.  The hypotheses of \cref{Theorem3} do not apply directly to $\mathbb{T}_{p,q}$, since $B_{p,q}$ is not simply connected.  But a straightforward modification of the argument does apply: given a class $\gamma$ in $\pi_1(\mathbb{T}_{p,q})$, the multiple $p\gamma$ can be capped off in $B_{p,q}$; any two cappings differ by an element of $H_2(B_{p,q})$, which necessarily has area zero since this group is torsion (as $B_{p,q}$ is a rational homology ball); so every $\gamma$ has a well-defined `capping area', and the proof now goes through as before.
\end{example}


\subsection{Proof of \cref{Theorem4}\ref{rk1}}
\label{sscrk1proof}

Now focus on the case $n=2$, $r=1$.  We wish to show that for a suitable choice of basis for $H_1(L)$ we have either:
\begin{itemize}
\item $W = a\pm x$ for some $a \in \ZZ$, and $\HLM$ is a subgroup of
\begin{equation}
\label{eqFirstBP}
\left\{ \begin{pmatrix} 1 & \ZZ \\ 0 & \pm1 \end{pmatrix} \right\} \subset \GL(2, \ZZ).
\end{equation}
\item $W = a \pm \big(x + \frac{1}{x}\big)$ for some $a \in \ZZ$, and $\HLM$ is a subgroup of
\begin{equation}
\label{eqSecondBP}
\left\{ \begin{pmatrix} \pm 1 & 2\ZZ \\ 0 & 1 \end{pmatrix} \right\} \subset \GL(2, \ZZ).
\end{equation}
\item $W = a \pm \big(x - \frac{1}{x}\big)$ for some $a \in \ZZ$, and $\HLM$ is a subgroup of
\[
\left\{ \begin{pmatrix} 1 & 4\ZZ \\ 0 & 1 \end{pmatrix} \right\} \subset \GL(2, \ZZ).
\]
\item $\HLM$ is trivial.
\end{itemize}


To begin, choose a basis $\gamma_1$, $\gamma_2$ for $H_1(L)$ such that $B_1$ lies in the span of $\gamma_1$.  Since $\HLM$ preserves $B_1$, we see that $\gamma_1$ must be a common eigenvector for $\HLM$, so $\HLM$ is a subgroup of
\[
\left\{ \begin{pmatrix} \pm 1 & \ZZ \\ 0 & \pm 1 \end{pmatrix} \right\} \subset \GL(2, \ZZ).
\]
Letting $x$ and $y$ be the corresponding monomials (i.e.~$z^{\gamma_1}$ and $z^{\gamma_2}$) in $\ZZ[H_1(L)]$, we also see that the superpotential $W$ is a function of $x$ only.  We now split into several cases: if $W$ doesn't have a critical point over $\CC$ then we'll see that the first or fourth bullet point holds, depending on whether $W$ has a critical point in positive characteristic; if $W$ has a critical point over $\CC$ and $\HLM$ contains a non-trivial element of positive determinant then we'll see that the second or third bullet point holds; and finally if $W$ has a critical point over $\CC$ and contains an element of negative determinant then we'll see that the second bullet point holds.

First suppose that $W$ has no critical points over $\CC$.  Then $\frac{\partial W}{\partial x}$ must be a non-zero multiple of a single monomial.  Since $W$ is an integer Laurent polynomial in $x$, we must have $W = a+bx^k$ for some $a, b, k \in \ZZ$ with $b, k \neq 0$.  By replacing $\gamma_1$ with $-\gamma_1$ if necessary, we may assume that $k > 0$.  Since $W$ is invariant under $\HLM$, we see that $\HLM$ is a subgroup of \eqref{eqFirstBP}.  To complete this case it suffices to show that either $k=1$ and $b = \pm 1$, or that $\HLM$ is trivial, so that the first or fourth bullet point holds.  Suppose then that $k > 1$ or $b \neq \pm 1$, and let $\KK$ be an infinite field of characteristic dividing $kb$.  Then $\diff W$ is identically zero over $\KK$, so every point of $H^1(L; \KK^\times)$ is a critical point and hence must be fixed by $\HLM$, by \cref{lemLocalSystemAction}.  Because $\KK$ is infinite, this forces $\HLM$ to be trivial.

Now suppose that $W$ does have a critical point over $\CC$, say $(\xi, \eta_0)$.  Since $W$ is independent of $y$, we see that $(\xi, \eta)$ is a critical point for all $\eta \in \CC^*$, and since $\HLM$ fixes critical points (\cref{lemLocalSystemAction} again) we get
\begin{equation}
\label{eqShear}
\xi^{\eps_1} = \xi \quad \text{and} \quad \xi^m\eta^{\eps_2} = \eta \quad \text{for all} \quad \begin{pmatrix} \eps_1 & m \\ 0 & \eps_2 \end{pmatrix} \in \HLM \quad \text{and all} \quad \eta \in \CC^*.
\end{equation}
The second equality tells us that $\eps_2$ must be $1$.

Suppose next that $\HLM$ contains
\[
g = \begin{pmatrix} 1 & m \\ 0 & 1 \end{pmatrix}
\]
for some $m \neq 0$.  From \eqref{eqShear} we have that $\xi$ is an $m$th root of unity, and we may assume that $\eta = 1$.  Equip $L$ with the local system $\calL = (\xi, 1)$ over $\ZZ[\xi]$, and let $HF^*$ be shorthand for $HF^*((L, \calL), (L, \calL))$.  Let $u$ and $v$ be the basis for $H^1(L)$ dual to our basis for $H_1(L)$, viewed as elements of $HF^1$ via the PSS map.  By \eqref{eqClifford} and \eqref{eqCliffordRelations}, $HF^*$ is a Clifford algebra on $u$ and $v$, in which $u^2 = \lambda$, $uv+vu = \mu$, and $v^2 = \nu$, where $\lambda$, $\mu$, and $\nu$ are elements of $\ZZ[\xi]$ given by
\[
\lambda = -\frac{\xi^2}{2}\frac{\partial^2 W}{\partial x^2}(\xi ,1) \text{, \quad} \mu = -\xi\frac{\partial^2 W}{\partial x \, \partial y}(\xi,1) \text{, \quad and \quad} \nu = -\frac{1}{2}\frac{\partial^2 W}{\partial y^2}(\xi,1).
\]
In our case we immediately get $\mu =\nu = 0$.

Consider the action of $g^\vee$ on $H^1(L)$.  This sends $u$ to $u+mv$ and $v$ to $v$.  By \cref{lemConjugation} there exists $c \in HF^0$, defined over $\ZZ[\xi]$, with inverse defined over the same ring, such that
\[
cu = (u+mv)c \quad \text{and} \quad cv = vc.
\]
Writing $c = p+quv$ and $c^{-1} = s + tuv$, with $p,q, r, s \in \ZZ[\xi]$, the equations reduce to $ps = 1$, $pt+qs=0$, and $2q\lambda = mp$.  In particular, we have $m = 2qs\lambda$, so $\lambda \neq 0$ and hence $\xi$ is not a repeated root of $\frac{\partial W}{\partial x}$.

Therefore every root of $\frac{\partial W}{\partial x}$ over $\CC$ is an $m$th root of unity, and none of these is a repeated root.  Since $\frac{\partial W}{\partial x}$ has integer coefficients, it must be of the form
\begin{equation}
\label{eqProdCyclo}
lx^k \prod_j \Phi_{d_j}(x),
\end{equation}
where $\Phi_d$ denotes the $d$th cyclotomic polynomial, the $d_j$ are distinct factors of $m$, and $l$ and $k$ are integers with $l \neq 0$.  If $l \neq \pm 1$ then by working in an infinite field $\KK$ of characteristic dividing $l$ we see that $\HLM$ is trivial, as above, contradicting the existence of $g$.  Hence $l = \pm 1$.

Now we can use the fact that $W$ itself has integer coefficients.  By considering the highest and lowest degree terms in \eqref{eqProdCyclo}, we see that $k+1$ and $k+1+\sum_j \phi(d_j)$ are both $\pm 1$, where $\phi(d_j)$ is the degree of $\Phi_{d_j}$.  The only possibility is that $k = -2$ and the $d_j$ are $\{1, 2\}$, $\{3\}$, $\{4\}$, or $\{6\}$.  This results in the following possibilities for $\frac{\partial W}{\partial x}$, up to an overall sign:
\[
1- \frac{1}{x^2}, \quad 1+\frac{1}{x}+\frac{1}{x^2}, \quad 1+\frac{1}{x^2}, \text{\quad and \quad} 1-\frac{1}{x}+\frac{1}{x^2}.
\]
The second and fourth options cannot be the derivative of a Laurent polynomial, so we are left with the first and third, corresponding to $W = a \pm (x+\frac{1}{x})$ and $W = a \pm (x - \frac{1}{x})$ for some $a \in \ZZ$ respectively.  If $W = a \pm (x+\frac{1}{x})$ then the critical locus is $\{\pm 1\} \times \CC^*$ and \eqref{eqShear} tells us that $\HLM$ is a subgroup of \eqref{eqSecondBP}, so the second bullet point holds.  If $W = a \pm (x-\frac{1}{x})$ instead then the critical locus is $\{\pm i\} \times \CC^*$ and we similarly see that the third bullet point holds.

Finally suppose $\HLM$ contains
\[
g = \begin{pmatrix} -1 & m \\ 0 & 1 \end{pmatrix}
\]
for some $m \in \ZZ$.  Since $W$ is invariant under $g$, it must be of the form
\[
n_0 + \sum_{j > 0} n_j\Big(x^j+\frac{1}{x^j}\Big)
\]
for some $n_j \in \ZZ$.  This forces the critical locus of $W$ to contain $\{\pm 1\} \times \CC^*$, and in fact this containment must be an equality since $g$ fixes critical points.  Equipping $L$ with the local system $\calL = (\xi = \pm 1, 1)$ over $\ZZ$, and considering the action of $g$ on $HF^*$, we get a continuation element $c \in HF^1$.  The existence of an invertible element in $HF^1$ forces $\Hess_\calL W$ to be non-zero, which means that $\xi$ cannot be a repeated root of $\frac{\partial W}{\partial x}$.  We are now back in the situation of \eqref{eqProdCyclo}, with $m=2$, and by the above argument we conclude that that the second bullet point holds.


\subsection{Proof of \cref{Theorem4}\ref{rk2}}

Suppose instead that $r=2$.  Then $\HLM$ is finite, by \cref{Theorem1}\ref{Theorem1itm2}, so is in one of the $13$ conjugacy classes of finite subgroups of $\GL(2, \ZZ)$.  These can be understood as follows.

The orientation-preserving part $\HLM^+$ of $\HLM$ is cyclic, and is determined up to conjugacy by its order, which may be $1$, $2$, $3$, $4$, or $6$.  We denote the conjugacy class of the group of order $m$ by $\gp{m}$.  The full group $\HLM$ is then either $\HLM^+$ itself, or is an extension of $\ZZ/2$ by $\HLM^+$, generated by $\HLM^+$ and a single orientation-reversing element $g$.

If $\HLM^+$ has order $1$ or $2$ then $\HLM$ is completely determined, up to conjugacy, by the conjugacy class of this element $g$.  Any such $g$ (orientation-reversing and of finite order in $\GL(2, \ZZ)$) is conjugate to exactly one of
\[
\begin{pmatrix} 1 & 0 \\ 0 & -1 \end{pmatrix} \quad \text{and} \quad \begin{pmatrix} 0 & 1 \\ 1 & 0 \end{pmatrix}.
\]
We denote these by $g_\rmf$ and $g_\rmt$ respectively, since they \emph{fix} a basis vector or \emph{transpose} two basis vectors.  We write $\gp{1}_\rmf$, $\gp{1}_\rmt$, $\gp{2}_\rmf$, and $\gp{2}_\rmt$ for the corresponding extensions of $\ZZ/2$ by $\gp{1}$ and $\gp{2}$.

If $\HLM^+$ has order $3$, $4$, or $6$, then there is a unique $\HLM$-invariant inner product on $H_1(L; \RR)$ such that the shortest non-zero element of $H_1(L)$ has length $1$.  The intersection of $H_1(L)$ with the unit circle is then a regular hexagon, square, or regular hexagon respectively, and we can identify $\HLM$ with a conjugacy class of subgroup of the dihedral symmetry group of the corresponding polygon.  For $\lvert\HLM^+\rvert = 4$, there are two possibilities: $\HLM = \HLM^+ = \gp{4}$; or $\HLM$ is the full dihedral group, which we denote by $\gp{4}_\rmd$.  Similarly for $\lvert\HLM^+\rvert = 6$ we have $\HLM = \gp{6}$ or $\gp{6}_\rmd$.  For $\lvert\HLM^+\rvert = 3$, meanwhile, there are three possibilities: $\HLM = \gp{3}$; $\HLM = \gp{3}_\rmv$, generated by $\gp{3}$ and a reflection in an axis through a vertex of the hexagon; or $\HLM = \gp{3}_\rme$, generated by $\gp{3}$ and a reflection in an axis through the midpoint of an edge.

The assertion of \cref{Theorem4}\ref{rk2} is made precise by the following.

\begin{proposition}
\label{propMainClassification}
The groups $\gp{2}_\rmt$, $\gp{3}_\rme$, $\gp{4}$, $\gp{4}_\rmd$, $\gp{6}$, and $\gp{6}_\rmd$ cannot occur as $\HLM$.  The groups $\gp{1}$, $\gp{1}_\rmf$, $\gp{1}_\rmt$, $\gp{2}_\rmf$, and $\gp{3}_\rmv$ can occur as $\HLM$ for monotone toric fibres $L$.  The groups $\gp{2}$ and $\gp{3}$ cannot occur as $\HLM$ for monotone toric fibres but we are unable to rule them out in general.
\end{proposition}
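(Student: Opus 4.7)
The proposition splits into three tasks: exhibiting monotone toric realisations for the five classes $\gp{1}, \gp{1}_\rmf, \gp{1}_\rmt, \gp{2}_\rmf, \gp{3}_\rmf$; showing $\gp{2}$ and $\gp{3}$ are not realised torically; and ruling out the six classes $\gp{2}_\rmt, \gp{3}_\rmt, \gp{4}, \gp{4}_\rmft, \gp{6}, \gp{6}_\rmft$ for general monotone tori.

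For the toric realisations, I would invoke \cref{ToricThm} and check explicit monotone Delzant polytopes: the Clifford torus in $\CC\PP^2$ realises $\gp{3}_\rmf$, the monotone product of equators in $S^2 \times S^2$ realises $\gp{2}_\rmf$, the one-point blowup of \cref{exBlowup} realises $\gp{1}_\rmf$, the Clifford torus in $\CC^2$ (via the non-compact framework of \cref{sscNonCompactToric}) realises $\gp{1}_\rmt$, and a sufficiently asymmetric blowup of $\CC\PP^2$ realises $\gp{1}$. Each verification is a direct computation of which permutations of the vertex normals fix the linear relation lattice $K$ pointwise. For the torical impossibility of $\gp{2}$ and $\gp{3}$, any permutation of the $\nu_j$ inducing $-I$ on $H_1(L)$ must pair each $\nu_j$ with $-\nu_j$, and the combinatorial structure of this pairing inside $K$ forces additional permutations (generating reflections), so $\HLM$ exceeds $\gp{2}$; an analogous argument handles $\gp{3}$.

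The main content is ruling out the six impossible classes, where the uniform tool is \cref{lemConjugation}: any $g \in \HLM$ fixing a critical point $\calL$ of $W$ must be implemented (up to a sign depending on orientation) by conjugation with an invertible element $c \in HF^d((L,\calL),(L,\calL))$ whose coefficients lie in $\ZZ[\xi^{\pm 1}]$. For the four classes $\gp{4}, \gp{4}_\rmft, \gp{6}, \gp{6}_\rmft$ containing a rotation of order $4$ or $6$, a direct calculation of the contragredient action of the rotation on $(\CC^*)^2$ shows its fixed points lie in $\{\pm 1\}^2$, so we may work over $\ZZ$; a Clifford algebra calculation in the spirit of \cref{sscrk1proof} then shows that no integer $c$ conjugates the generators of $H^1(L;\CC)$ by an order $4$ or $6$ rotation, the consistency relations forcing $c^2$ to be a non-integer rational. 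For $\gp{2}_\rmt$ and $\gp{3}_\rmt$ the critical points need not be integral, so $c$ lives in an extension ring with more units; here the plan is to combine the Clifford condition for a transpose-type reflection (the analogue of $\beta^2(2\lambda - \mu)$ being a unit in $\ZZ[\xi^{\pm 1}]$) with the integrality and swap symmetry of the Laurent coefficients of $W$, to show the constraints cannot be simultaneously satisfied at every critical point without $\HLM$ acquiring extra elements.

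The main obstacle is the $\gp{2}_\rmt$ and $\gp{3}_\rmt$ step: the pure rotation cases reduce to a finite integer calculation, but the transpose-reflection cases require a global argument combining local Clifford constraints at every critical point with the integrality and swap symmetry of $W$'s Laurent coefficients, and formulating this precisely is where I expect the real difficulty to lie.
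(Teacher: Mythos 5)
There is a genuine gap, and it sits exactly where you locate the ``real difficulty''. Your plan for excluding the six forbidden classes leans entirely on \cref{lemConjugation} and Clifford-algebra computations, but (i) you never establish that $W$ has any critical points, without which \cref{lemConjugation} is vacuous, and (ii) you concede you have no complete argument for $\gp{2}_\rmt$ and $\gp{3}_\rmt$. The idea that fills both holes --- and that makes the Clifford computations unnecessary even in the rotation cases --- is the combination of \cref{lemLocalSystemAction} with \cref{lemExistenceOfCriticalPoints}: every element of $\HLM$ must fix every critical point of $W$, and the rotation subgroup \emph{forces} critical points to exist at prescribed locations (if $\lvert\HLM^+\rvert$ is even, the eigenvalue argument applied to $\psi_{-I}$ shows $\{\pm1\}\times\{\pm1\}$ are critical points, and in fact the only ones; if $3$ divides $\lvert\HLM^+\rvert$, the points $(\zeta,\zeta)$ with $\zeta^3=1$ are the critical points). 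Each of $\gp{2}_\rmt$, $\gp{4}$, $\gp{4}_\rmft$ contains an element sending $(-1,1)$ to $(1,-1)$, and each of $\gp{3}_\rmt$, $\gp{6}$, $\gp{6}_\rmft$ contains an element swapping the two non-trivial $(\zeta,\zeta)$; all six are therefore excluded in one stroke, with no continuation elements and no integrality analysis. Your observation that the order $4$ and $6$ rotations fix only points of $\{\pm1\}^2$ is true but aimed at the wrong target: what matters is that these groups contain elements which \emph{move} a critical point, not what happens at the points they fix.

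Two smaller problems. First, $\Bl_1\CC\PP^2$ realises $\gp{1}_\rmt$, not $\gp{1}_\rmf$: the transposition of $\nu_1=(0,1)$ and $\nu_3=(1,0)$ from \cref{exBlowup} is conjugate to $g_\rmt$ since these normals are linearly independent. For $\gp{1}_\rmf$ you need a transposition of a pair of \emph{opposite} normals with no further symmetry, e.g.\ $\CC\times\CC\PP^1$. Second, your toric exclusion of $\gp{2}$ and $\gp{3}$ is underpowered: the clean route is that \cref{ToricThm} (via the partition argument of \cref{sscToricProof}) forces $\HLM$ to be a product of full symmetric groups $S_{N_1}\times\dots\times S_{N_k}$, which never has order $3$, and has order $2$ only when it is a single transposition of two normals --- which is conjugate to $g_\rmt$ or $g_\rmf$ according as the two normals are independent or opposite, hence orientation-reversing and not $\gp{2}$. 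Your ``analogous argument'' for $\gp{3}$ cannot be analogous to your $\gp{2}$ argument, since $\gp{3}$ contains no element pairing each $\nu_j$ with $-\nu_j$.
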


Before proving this we establish a simple connection between $\HLM^+$ and critical points of $W$.

\begin{lemma}
\label{lemExistenceOfCriticalPoints}
If $\lvert \HLM^+ \rvert$ is even then, with respect to any basis of $H_1(L)$, the critical locus of $W$ over $\CC$ is $\{\pm 1\} \times \{\pm 1\}$.  If $\lvert \HLM^+\rvert$ is divisible by $3$, and we fix a basis of $H_1(L)$ so that the order $3$ elements are
\begin{equation}
\label{eqOrder3}
\begin{pmatrix} 0 & -1 \\ 1 & -1 \end{pmatrix} \quad \text{and} \quad \begin{pmatrix} -1 & 1 \\ -1 & 0 \end{pmatrix},
\end{equation}
then the critical locus of $W$ over $\CC$ is $\{(\zeta, \zeta) \in (\CC^*)^2 : \zeta^3 = 1\}$.
\end{lemma}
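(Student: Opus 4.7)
The plan is to exploit the fact that $W$ is $\HLM$-invariant as a function on $H^1(L;\CC^*)$. This follows from \cref{lemDiscCountInvariance}: since $\HLM$ permutes classes $\beta \in H_2(X,L)$ preserving the disc counts $n_\beta$, the induced action on $H^1(L;\CC^*)$ permutes the monomials $z^{\partial\beta}$ of $W$ together with their coefficients, and hence fixes $W$. Consequently, for any $g \in \HLM$ and any point $p \in H^1(L;\CC^*)$ fixed by $g$, the covector $dW(p)$ lies in the $g$-invariant subspace of $T_p^*H^1(L;\CC^*)$; whenever that invariant subspace is zero, $p$ must be a critical point of $W$.

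For the even-order case I would note that the cyclic group $\HLM^+$ contains a unique element of order $2$, which is necessarily $-I$ (the only orientation-preserving involution in $\GL(2,\ZZ)$). It acts on $(\CC^*)^2$ by $(x,y)\mapsto(x^{-1},y^{-1})$ independently of the choice of basis, with fixed locus exactly $\{\pm 1\}\times\{\pm 1\}$. Passing to logarithmic coordinates at any such fixed point, the induced action on the tangent space is again $-I$, so the only invariant covector is zero. Hence each of the four points is a critical point of $W$.

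For the order-$3$ case, write $\rho$ for the first matrix in \eqref{eqOrder3}; a direct computation identifies the second matrix as $\rho^{-1}$. Unwinding the dual action via $(\rho\cdot\calL)(\gamma)=\calL(\rho^{-1}\gamma)$, one finds that $\rho$ acts on $(\CC^*)^2$ by $(x,y)\mapsto((xy)^{-1},x)$, with fixed locus exactly $\{(\zeta,\zeta):\zeta^3=1\}$. In logarithmic coordinates near any such fixed point, using $\zeta^{-2}=\zeta$, the induced tangent action is given by the integer matrix $\begin{pmatrix} -1 & -1 \\ 1 & 0 \end{pmatrix}$, whose characteristic polynomial $\lambda^2+\lambda+1$ has only primitive cube roots of unity as roots. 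In particular neither the tangent nor the cotangent action has a non-zero fixed vector, and so $dW$ vanishes at each of the three points.

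The only step requiring real care is correctly dualising the $\HLM$-action in the order-$3$ case to pin down the fixed locus on $(\CC^*)^2$; everything after that is immediate linear algebra once $\HLM$-invariance of $W$ is in hand.
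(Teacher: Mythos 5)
Your proposal is correct and follows essentially the same route as the paper: use $\HLM$-invariance of $W$ to show that a fixed point of an element whose (co)tangent action has no eigenvalue $1$ must be critical, then apply this to $-I$ (which you rightly note is forced to lie in $\HLM^+$ when its order is even) and to the order-$3$ generator. The only cosmetic difference is that you dualise the action with the opposite convention, obtaining $(x,y)\mapsto((xy)^{-1},x)$ where the paper gets $(x,y)\mapsto(y,(xy)^{-1})$; since the two matrices in \eqref{eqOrder3} are mutually inverse this changes nothing.
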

\begin{proof}
Suppose $\HLM^+$ has even order.  Then it contains $-I$, whose $1$-eigenspace is trivial, so \cref{lemKerIntersect} tells us that any point in $H^1(L; \CC^*)$ fixed by $-I$ is a critical point of $W$.  Conversely, any critical point of $W$ must be fixed by $-I$.  Hence the critical locus of $W$ is $\{\pm 1\} \times \{\pm 1\}$.

The argument for $\lvert \HLM^+ \rvert$ divisible by $3$ is similar, using the matrices in \eqref{eqOrder3}.
\end{proof}

\begin{proof}[Proof of \cref{propMainClassification}]
First we rule out the six impossible groups, using \cref{lemExistenceOfCriticalPoints} and the fact that the action of $\HLM$ must fix each critical point of $W$ (\cref{lemLocalSystemAction}).

Suppose that $\lvert \HLM^+ \rvert$ is even.  Then $\{\pm 1\} \times \{\pm 1\}$ are critical points of $W$.  To rule out $\gp{2}_\rmt$, $\gp{4}$, and $\gp{4}_\rmd$, note that we can choose bases for $H_1(L)$ such that $\gp{2}_\rmt$ and $\gp{4} \subset \gp{4}_\rmd$ contain
\[
\begin{pmatrix} 0 & 1 \\ 1 & 0 \end{pmatrix} \quad \text{and} \quad \begin{pmatrix} 0 & -1 \\ 1 & 0 \end{pmatrix}
\]
respectively.  These elements' actions on $H^1(L; \CC^*)$ swap $(-1, 1)$ and $(1, -1)$, which is forbidden.

Now suppose instead that $\lvert \HLM^+ \rvert$ is divisible by $3$, and fix the same basis for $H_1(L)$ as in \cref{lemExistenceOfCriticalPoints}.  In the hexagon picture discussed above \cref{propMainClassification}, this basis comprises two non-adjacent, non-opposite vertices of the primitive hexagon.  The critical locus of $W$ is then $\{(\zeta, \zeta) : \zeta^3 = 1\}$.  The groups $\gp{3}_\rme$ and $\gp{6} \subset \gp{6}_\rmd$ contain
\[
\begin{pmatrix} -1 & 1 \\ 0 & 1 \end{pmatrix} \quad \text{and} \quad \begin{pmatrix} 1 & -1 \\ 1 & 0 \end{pmatrix}
\]
respectively, whose actions on $H^1(L; \CC^*)$ swap the $\zeta = e^{2\pi i /3}$ and $\zeta = e^{4\pi i/3}$ critical points.  So these groups are similarly ruled out.

Our second task is to exhibit monotone toric fibres realising the groups $\gp{1}$, $\gp{1}_\rmf$, $\gp{1}_\rmt$, $\gp{2}_\rmf$, and $\gp{3}_\rmv$.  We do this by listing suitable toric manifolds alongside pictures of the corresponding $\Delta$ in \cref{tabToricExamples}, where $\Bl_k$ denotes blowup at $k$ toric fixed points.
\begin{table}
\renewcommand{\arraystretch}{2}
\begin{tabular}{>{\centering$}m{2cm}<{$} >{\centering$}m{4cm}<{$} >{\centering\arraybackslash}m{6cm}}
\toprule
\HLM & \text{Toric manifold} & $\Delta$
\\ \midrule \addlinespace[0.5em]
\gp{1} & \Bl_2 \CC\PP^2 \text{ or } \Bl_3 \CC\PP^2 & \begin{tikzpicture}\begin{scope}[xshift=-1.7cm]\draw (-1, -1) -- (1, -1) -- (1, 0) -- (0, 1) -- (-1, 1) -- cycle;\end{scope}\draw (0,0) node[anchor=north]{or};\begin{scope}[xshift=1.7cm]\draw (-1, 0) -- (0, -1) -- (1, -1) -- (1, 0) -- (0, 1) -- (-1, 1) -- cycle;\end{scope}\end{tikzpicture}
\\ \gp{1}_\rmf & \CC \times \CC\PP^1 & \begin{tikzpicture}\draw[dashed] (2, 1) -- (1, 1);\draw (1, 1) -- (-1, 1) -- (-1, -1) -- (1, -1);\draw[dashed] (2, -1) -- (1, -1);\end{tikzpicture}
\\ \gp{1}_\rmt & \Bl_1 \CC\PP^2 \text{ or } \CC^2 & \begin{tikzpicture}\begin{scope}[xshift=-1.7cm, scale=2/3]\draw (-1, 0) -- (0, -1) -- (2, -1) -- (-1, 2) -- cycle;\end{scope}\draw (0,0.5) node[anchor=north]{or};\begin{scope}[xshift=1.7cm, scale=2/3]\draw[dashed] (-1, 2) -- (-1, 1);\draw (-1, 1) -- (-1, -1) -- (1, -1);\draw[dashed] (1, -1) -- (2, -1);\end{scope}\end{tikzpicture}
\\ \gp{2}_\rmf & \CC\PP^1 \times \CC\PP^1 & \begin{tikzpicture}\draw (-1, -1) -- (-1, 1) -- (1, 1) -- (1, -1) -- cycle;\end{tikzpicture}
\\ \gp{3}_\rmv & \CC\PP^2 & \begin{tikzpicture}[scale=2/3]\draw (-1, -1) -- (2, -1) -- (-1, 2) -- cycle;\end{tikzpicture}
\\ \bottomrule
\end{tabular}
\caption{Toric manifolds realising various groups for $\HLM$.\label{tabToricExamples}}
\end{table}

Now suppose $L$ is a monotone toric fibre.  Our final task is to explain why $\HLM$ cannot be $\gp{2}$ or $\gp{3}$.  Ruling out $\gp{3}$ is straightforward, since we know from \cref{sscToricProof} that $\HLM$ is a product of symmetric groups, so cannot have order $3$.  To rule out $\gp{2}$, suppose $\HLM$ has order $2$.  Then, again from \cref{sscToricProof}, $\HLM$ is generated by the transposition $g$ of two normal vectors, say $\nu_1$ and $\nu_2$.  If $\nu_1$ and $\nu_2$ are linearly independent over $\QQ$ then $g$ is conjugate to $g_\rmt$ in $\GL(2, \QQ)$.  Otherwise $\nu_1$ and $\nu_2$ are proportional over $\QQ$, and since they are primitive and distinct they must in fact be negatives of each other.  This means $g$ is conjugate to $g_\rmf$ in $\GL(2, \QQ)$.  In either case we see that the generator of $\HLM$ is orientation-reversing, so $\HLM$ cannot be $\gp{2}$.
\end{proof}


\subsection{Constraints on Floer theory from $\HLM$}
\label{sscFloerConstraints}

We now change perspective and start with a monotone Lagrangian $2$-torus $L$ and suppose that $\HLM$ is known.  Our goal is to extract information about the Floer theory of $L$.  We restrict to the $r=2$ case, since \cref{Theorem4}\ref{rk1} already gives our best results for $r=1$.  We also restrict to $\HLM^+ = \gp{2}$ or $\gp{3}$, since $\HLM^+ = \gp{1}$ doesn't tell us much.

First suppose that $\HLM^+ = \gp{3}$.  Up to symplectomorphism, the only monotone torus we know with $\HLM^+ = \gp{3}$ is the monotone toric fibre in $\CC\PP^2$, so one might expect $L$ to look Floer-theoretically like the latter.  It is proved in \cite[Theorem 5]{SmithSuperfiltered} that the full Floer $A_\infty$-algebra of $L$ is encoded in its superpotential $W$ (in particular, the Floer algebra is quasi-isomorphic to the endomorphism dg-algebra of a specific matrix factorisation of $W$), so one might hope to relate $W$ to $W_{\CC\PP^2}$.

We saw in \cref{lemExistenceOfCriticalPoints} that, with respect to a suitable basis of $H_1(L)$, the critical points of $W$ over $\CC$ are $\{(\zeta, \zeta) : \zeta^3 = 1\}$.  Let $x$ and $y$ be the induced coordinates on $H^1(L; \CC^*)$.  With respect to the standard coordinates, $W_{\CC\PP^2}$ is given by $x + y + \frac{1}{xy}$, which has exactly the same critical points, so we already have some evidence that $W$ and $W_{\CC\PP^2}$ are related.  The next result takes this a step further, identifying them to second order (up to an overall scale factor).  Recall that the second order behaviour of the superpotential encodes the multiplication on the Floer cohomology ring.

\begin{proposition}
\label{propFloer3}
If $\HLM^+ = \gp{3}$ then for each critical point $\calL = (\zeta, \zeta)$ there exists $\eps$ of the form $\pm \zeta^j$ such that
\[
\Hess_\calL W = \eps \Hess_\calL W_{\CC\PP^2},
\]
where $W_{\CC\PP^2} = x+y+\frac{1}{xy}$.
\end{proposition}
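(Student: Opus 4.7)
The plan is to combine three constraints: $\psi_3$-invariance of $W$, the Clifford algebra description of Floer cohomology (via Lemma \ref{lemConjugation}), and integrality of $W$ as a Laurent polynomial with integer coefficients. First, at each critical point $\calL = (\zeta, \zeta)$ (with $\zeta^3 = 1$), the linearisation of $\psi_3$ at $\calL$ is an order-$3$ $\CC$-linear rotation of $H^1(L; \CC)$ with no real axis, so the space of invariant symmetric bilinear forms is one-dimensional. It is spanned by $M = \begin{pmatrix} 2 & 1 \\ 1 & 2 \end{pmatrix}$, and hence $\psi_3$-invariance of $W$ gives $\Hess_\calL W = b(\zeta)\, M$ for some $b(\zeta) \in \CC$.

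Next, decompose $W$ as a sum over $\psi_3$-orbits of monomials; for an orbit $O$ with representative $(a, b) \in \ZZ^2$, set $S_O = a + b$ and let $N(O) = a^2 - ab + b^2$ (the norm form on $\ZZ[\omega]$, which is independent of the chosen representative). A direct computation yields
\[
b(\zeta) \;=\; \sum_O n_O\, N(O)\, \zeta^{S_O - 2},
\]
where $n_O \in \ZZ$ is the coefficient of the orbit sum in $W$. Grouping orbits by $S_O \pmod 3$ and setting $c_k = \sum_{S_O \,\equiv\, k + 1 \,(\bmod\, 3)} n_O N(O)$, we obtain $\zeta\, b(\zeta) = c_0 + c_1 \zeta + c_2 \zeta^2$ at every cube root $\zeta$ of unity. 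On the Floer side, applying Lemma \ref{lemConjugation} to the order-$3$ generator of $\HLM^+$ produces an element $c \in HF^0((L, \calL), (L, \calL))$ defined over $\ZZ[\zeta]$, with inverse also over $\ZZ[\zeta]$, whose conjugation realises the order-$3$ action on $H^1(L; \CC)$. Writing $c = \alpha + \beta\, uv$ in the Clifford basis, the conjugation relations collapse to $\alpha = \beta\, b(\zeta)$, and computing $c^{-1}$ explicitly shows that its integrality forces $b(\zeta) \in \ZZ[\zeta]^\times$.

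Applied at each $\zeta \in \{1, \omega, \omega^2\}$, the unit constraint for $b(\zeta)$ combined with the polynomial relation $\zeta b(\zeta) = c_0 + c_1 \zeta + c_2 \zeta^2$ leaves exactly the six ``axis'' configurations for $(c_0, c_1, c_2)$, namely those in which one coordinate is $\pm 1$ and the others vanish. The crucial arithmetic step is that $3$ always divides $c_2$: indeed, if $S_O \equiv 0 \pmod 3$ then $a \equiv -b \pmod 3$, so $N(O) \equiv 3 b^2 \equiv 0 \pmod 3$. This rules out the two configurations with $c_2 = \pm 1$. The remaining four configurations $(\pm 1, 0, 0)$ and $(0, \pm 1, 0)$ are permuted by the centraliser of the order-$3$ matrix in $\GL(2, \ZZ)$, which is cyclic of order six (identified with $\ZZ[\omega]^\times$) and acts on $(c_0, c_1, c_2)$ by swapping $c_0 \leftrightarrow c_1$. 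After a change of basis within this centraliser we may assume $(c_0, c_1, c_2) = (\eps, 0, 0)$ for some $\eps \in \{\pm 1\}$, giving $b(\zeta) = \eps\, \zeta^{-1}$ uniformly. Since a direct calculation gives $\Hess_\calL W_{\CC\PP^2} = \zeta^{-1}\, M$ at each $(\zeta, \zeta)$, the proposition follows.

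The main obstacle is the arithmetic observation that $3 \mid c_2$: without it, the two ``bad'' configurations $(0, 0, \pm 1)$ (where $b(\zeta)$ is a non-uniform unit across the three critical points) cannot be ruled out by purely local Clifford algebra arguments, and would give apparent counterexamples. This step is what ties the Floer-theoretic constraint from Lemma \ref{lemConjugation} to the arithmetic of the Eisenstein integers in an essential way.
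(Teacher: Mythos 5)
Your argument is correct, and it takes a genuinely more detailed route than the paper's. The paper works only at the trivial local system: invariance of the ring structure gives $\lambda=\mu=\nu$, and integrality plus invertibility of the continuation element force $\lambda=\pm1$; the other two critical points are then dispatched with the single sentence ``it suffices to prove the claim for the trivial local system''. You instead run the same Clifford-algebra/continuation-element argument at all three critical points over $\ZZ[\zeta]$ --- which only shows that the scalar $b(\zeta)$ with $\Hess_{(\zeta,\zeta)}W=b(\zeta)M$ is a unit of $\ZZ[\zeta]$, i.e.\ a sixth root of unity when $\zeta\neq1$ --- and then pin down the global configuration arithmetically: the orbit decomposition of $W$, the relation $\zeta b(\zeta)=c_0+c_1\zeta+c_2\zeta^2$ with $c_k\in\ZZ$, the enumeration of the six ``axis'' solutions, and the divisibility $3\mid N(O)$ whenever $S_O\equiv0\pmod 3$, which kills the two configurations $(0,0,\pm1)$ that no allowed change of basis could remove. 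This buys a complete justification of the step the paper compresses into one sentence: $\psi_3$ fixes each critical point, so invariance of $W$ alone does not relate the Hessians at the three of them (e.g.\ $x+y+\tfrac1{xy}$ and $\tfrac1x+\tfrac1y+xy$ are both invariant, agree at $(1,1)$, but have Hessians differing by a non-real unit at $(\omega,\omega)$). Two caveats are worth recording. First, your last step applies $-I$ to move $(0,\eps,0)$ to $(\eps,0,0)$; this is legitimate since $-I$ commutes with \eqref{eqOrder3} and preserves the critical locus, so it stays within the bases allowed by \cref{lemExistenceOfCriticalPoints}, but it shows the proposition must be read as holding for a \emph{suitable} such basis --- the $(0,\eps,0)$ case genuinely occurs (for instance for $\tfrac1x+\tfrac1y+xy$), so this residual freedom is needed. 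Second, your normal form $\alpha=\beta\,b(\zeta)$ for the continuation element differs from the paper's $p=0$; this depends on which order-$3$ generator (or its transpose/inverse) one conjugates by, and either version yields the essential conclusion that $b(\zeta)$ is a unit.
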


\begin{proof}[Proof of \cref{propFloer3}]
Fix a cube root of unity $\zeta$, and equip $L$ with the local system $\calL = (\zeta, \zeta)$ over $\ZZ[\zeta]$.  As in \cref{sscrk1proof} let $u$ and $v$ be the basis for $H^1(L)$ dual to our basis for $H_1(L)$, and let $HF^*$ denote $HF^*((L, \calL), (L, \calL))$.  Again $HF^*$ is a Clifford algebra on $u$ and $v$, satisfying $u^2 = \lambda$, $uv+vu = \mu$, and $v^2 = \nu$, where $\lambda, \mu, \nu \in \ZZ[\zeta]$ are now given by
\[
\lambda = -\frac{\zeta^2}{2}\frac{\partial^2 W}{\partial x^2}(\zeta, \zeta) \text{, \quad} \mu = -\zeta^2\frac{\partial^2 W}{\partial x \, \partial y}(\zeta, \zeta) \text{, \quad and \quad} \nu = -\frac{\zeta^2}{2}\frac{\partial^2 W}{\partial y^2}(\zeta, \zeta).
\]
We need to show that $\lambda = \mu = \nu$ is of the form $\pm \zeta^j$; then $\eps = -\lambda/\zeta$ works.

Consider the first generator of $\gp{3}$ in \eqref{eqOrder3}.  Its action on $H^1(L)$ sends $u$ to $-v$ and $v$ to $u-v$.  This must respect the ring structure on $HF^*$ (either directly by invariance of Floer cohomology or indirectly by invariance of $W$), so we must have $\lambda = \mu = \nu$.  We also know from \cref{lemConjugation} that there exists $c \in HF^0$ such that
\[
cu = -vc \quad \text{and} \quad cv = (u-v)c.
\]
This $c$ is of the form $p+quv$ for some $p,q \in \ZZ[\zeta]$, and plugging this expression into the above equalities gives $p=0$.  We also know that $c$ is inverted by some element $r+suv$, where $r,s\in\ZZ[\zeta]$, which gives $\lambda^2sq = -1$.  Therefore $\lambda$ is a unit in $\ZZ[\zeta]$, and hence is of the form $\pm \zeta^j$.
\end{proof}

Suppose instead that $\HLM^+ = \gp{2}$.  We would now like to show that $L$ Floer-theoretically resembles the monotone toric fibre in $\CC\PP^1 \times \CC\PP^1$.  From \cref{propMainClassification} we know that, with respect to any basis of $H_1(L)$, the critical points of $W$ over $\CC$ are $\{\pm 1\} \times \{\pm 1 \}$.  These are the same as the critical points of $W_{\CC\PP^1 \times \CC\PP^1} = x+\frac{1}{x}+y+\frac{1}{y}$.

\begin{proposition}
If $\HLM^+ = \gp{2}$ then for each critical point $\calL$ there exist coordinates $(x, y)$ on $H^1(L; \CC^*)$ such that either:
\[
\Hess_\calL W = \Hess_\calL \Big(xy+\frac{1}{xy}\Big),
\]
or there exist $\eps_1, \eps_2 \in \{\pm 1\}$ such that
\[
\Hess_\calL W = \Hess_\calL \Big(\eps_1\Big(x+\frac{1}{x}\Big) + \eps_2 \Big(y+\frac{1}{y}\Big)\Big).
\]
If $\HLM = \gp{2}_\rmf$ then only the second case is possible (and this occurs with $\eps_1 = \eps_2 = 1$ for the monotone toric fibre in $\CC\PP^1 \times \CC\PP^1$).
\end{proposition}
\begin{proof}
Equip $L$ with the local system $\calL = (\xi_1, \xi_2) \in \{\pm 1\} \times \{\pm 1\}$ over $\ZZ$, and consider its Floer cohomology $HF^*$.  This is generated by $u$ and $v$ subject to $u^2 = \lambda$, $uv+vu = \mu$, $v^2 = \nu$ for integers $\lambda$, $\mu$, and $\nu$.  Since $W$ is $\HLM$-invariant we have $W(x^{-1}, y^{-1}) = W(x, y)$, so $W$ is of the form
\[
W = n_{0,0} + \smashoperator{\sum_{\substack{i > 0 \\ \text{or} \\ i=0 \text{, } j > 0}}} n_{i,j} (x^iy^j + x^{-i}y^{-j})
\]
for some integers $n_{i,j}$.  We then have, using \eqref{eqCliffordRelations}, that
\[
\lambda = -\sum n_{i,j}i^2\xi_1^i\xi_2^j \text{, \quad} \mu = -2\sum n_{i,j}ij\xi_1^i\xi_2^j \text{, \quad and \quad} \nu = -\sum n_{i,j}j^2\xi_1^i\xi_2^j,
\]
where the sums are all implicitly over those $(i,j)$ with $i > 0$ or $i=0$ and $j>0$ as before.  In particular, $\mu$ must be even---say $\mu=2\mu'$.  The quadratic form $Q = -\tfrac{1}{2} \Hess_\calL W$ defining our Clifford algebra arises from the symmetric bilinear form represented by the matrix
\[
\begin{pmatrix} \lambda & \mu' \\ \mu' & \nu \end{pmatrix},
\]
and we need to show that by a change of basis we can transform it to
\[
Q_1 = \begin{pmatrix} 0 & 1 \\ 1 & 0 \end{pmatrix} \quad \text{or} \quad Q_2 = \begin{pmatrix} \eps_1 & 0 \\ 0 & \eps_2 \end{pmatrix},
\]
i.e.~to the hyperbolic form or to a diagonal unimodular form.

By \cref{lemConjugation} we know that there exist integral $c, c^{-1} \in HF^0$ such that $cu=-uc$ and $cv=-vc$.  Writing out what this means, we see that $c = a(\mu'-uv)$ and $c^{-1} = b(\mu'-uv)$ for some integers $a$ and $b$ satisfying
\[
ab(\mu'^2-\lambda\nu) = 1.
\]
In particular, $a$, $b$, and $\mu'^2-\lambda\nu$ must all be $\pm 1$.  The latter says precisely that the form is unimodular.

Suppose first that $\mu'^2-\lambda\nu = -1$.  Then $Q$ is positive or negative definite, and it is well-known that by a change of basis we can transform it to $Q_2$ with $\eps_1 = \eps_2$ equal to $1$ or $-1$ respectively.

Now suppose that $\mu'^2-\lambda\nu = 1$.  Then $Q$ is indefinite, of rank $2$ and signature $0$, so is determined up to equivalence by its parity.  If it's even then it's equivalent to the hyperbolic form, whilst if it's odd then it's equivalent to the diagonal form $Q_2$ with $\eps_1 = -\eps_2$.

Finally suppose that $\HLM = \gp{2}_\rmf$.  We may assume that our basis for $H_1(L)$ was chosen so that $\HLM$ contains $g_\rmf$.  Then $\HLM$-invariance of $Q$ implies that $\mu = 0$, and by considering continuation elements associated to the actions of $g_\rmf$ and $-g_\rmf$ we get that $\lambda$ and $\nu$ are $\pm 1$.
\end{proof}


\section{Proof of \cref{Theorem5}}
\label{secThm5}

In this section we explore the product Lagrangian tori in $\CC \times T^*S^1$ and $\CC\PP^1 \times T^*S^1$ and compute their homological Lagrangian monodromy groups.  In the latter case we obtain the first known examples of elements of infinite order in $\HLM$ for monotone $L$, independently constructed by Brendel using different methods \cite{Brendel}.


\subsection{$\CC \times T^*S^1$}

Let $X = \CC \times T^*S^1$, let $L \subset X$ be the product of the unit circle and the zero section, and let $\gamma_1, \gamma_2$ be the corresponding basis for $H_1(L)$.  We need to show that $W = x$, and $\HLM$ is trivial.

The computation of the superpotential is well-known: equip $X$ with a product complex structure that makes it diffeomorphic to $\CC \times \CC^*$, and use the open mapping principle to see that the only holomorphic discs of index $2$ are, up to reparametrisation, simply inclusions of the unit disc in $\CC \times \{\text{point}\}$.  For exactly the same reason as in the toric case, these discs are regular and count positively for the standard spin structure.  So $W = z^{\gamma_1} = x$.

It remains to compute $\HLM$, and by \cref{Theorem4}\ref{rk1} we know that every element has the form
\[
g = \begin{pmatrix} 1 & m \\ 0 & \eps \end{pmatrix}
\]
for some $m \in \ZZ$ and some $\eps \in \{\pm 1\}$.  Suppose this $g$ lies in $\HLM$, realised by a Hamiltonian isotopy $\phi_t$, and let $i_*$ denote the pushforward $H_1(L) \to H_1(X) = \langle i_*(\gamma_2) \rangle \cong \ZZ$.  Since Hamiltonian diffeomorphisms act trivially on the homology of $X$ we must have
\[
i_*(\gamma_2) = i_*(g\gamma_2) = i_*(m\gamma_1 + \eps\gamma_2) = \eps i_*(\gamma_2),
\]
and hence $\eps = 1$.

Now consider the cylinder $C$ swept out by (a curve in class) $\gamma_2$ along the isotopy $\phi_t$.  Since $\phi_t$ is Hamiltonian, the flux $\int_C \omega$ must vanish.  Because the ends of $C$ lie on the Lagrangian $L$, the expression $\int_C \omega$ depends only on the class of $C$ in $H_2(X, L)$.  In the present situation, $H_2(X, L)$ is isomorphic to $\langle \gamma_1 \rangle \cong \ZZ$ via the boundary map, and $C$ is in the class corresponding to $m\gamma_1$.  This class is also represented by $m$ times one of the holomorphic discs $u$ contributing to $W$, so
\[
0 = \int_C \omega = m \int_{D^2} u^* \omega.
\]
This forces $m$ to be zero, so $\HLM$ is trivial.

\begin{remark}
\label{rmkFlux}
Suppose $L$ is an arbitrary Lagrangian in a symplectic manifold $X$.  Given a loop $\gamma$ in $L$, and any isotopy $\phi_t$ of $X$ satisfying $\phi_1(L) = L$, the cylinder swept by $\gamma$ along $\phi_t$ defines a class $[C]$ in $\pi_1(\Omega X, \Omega L)$, and this class has a well-defined area.  Changing $\phi_t$ to a different isotopy with the same end-points modifies $[C]$ by an element of $\pi_1(\Omega X) \cong \pi_2(X)$, by the long exact sequence of the pair, so the area of $[C]$ is determined modulo $\omega(\pi_2(X))$ by the two end-points.  If $\phi_t$ is actually a Hamiltonian isotopy then $[C]$ has area zero, so the area of any cylinder in $X$ connecting $\gamma$ to $\phi_1(\gamma)$ must lie in $\omega(\pi_2(X))$.  If also $L$ is monotone then we can replace area with Maslov index, and deduce that the Maslov index of any such cylinder must lie in $2c_1(X)(\pi_2(X))$.

Suppose now that $L$ is a monotone $2$-torus with $W = a \pm x$, realising the monodromy $\left(\begin{smallmatrix} 1 & 1 \\ 0 & 1 \end{smallmatrix}\right)$.  There is an obvious cylinder from $\gamma_2$ to $\gamma_1 + \gamma_2$ given by taking the constant cylinder from $\gamma_2$ to itself and forming its boundary connected sum with an index $2$ disc with boundary $\gamma_1$ (such a disc exists because its class contributes to the superpotential).  This cylinder has Maslov index $2$, so by the previous paragraph there must be a sphere in $X$ with Chern number $1$.
\end{remark}


\subsection{$\CC\PP^1 \times T^*S^1$}

Let $X = \CC\PP^1 \times T^*S^1$ and let $L \subset X$ be the product of the equator and the zero section.  This has $W = x+\frac{1}{x}$ with respect to the obvious basis of $H_1(L)$, by a similar calculation to that for $\CC \times T^*S^1$.  By \cref{Theorem4}, $\HLM$ is a subgroup of the group generated by
\[
g = \begin{pmatrix} 1 & 2 \\ 0 & 1 \end{pmatrix} \quad \text{and} \quad h = \begin{pmatrix} -1 & 0 \\ 0 & 1 \end{pmatrix}.
\]
It's clear that $\HLM$ contains $h$, by rotating $\CC\PP^1$ about a horizontal axis.  We need to show that $\HLM$ also contains $g$.

To do this, we view $X$ as a symplectic reduction of $\CC^2 \times T^*S^1$ with respect to the diagonal $\U(1)$-action on the $\CC^2$ factor.  Let $(z, w)$ denote complex coordinates on $\CC^2$, let $\theta$ be the standard coordinate $S^1$, and let $t$ be the dual coordinate on cotangent fibres.  Let $\rho : S^1 \to \SU(2)$ be the loop
\[
\rho_\theta = \begin{pmatrix} e^{i\theta} & 0 \\ 0 & e^{-i\theta} \end{pmatrix}.
\]
Using the standard Hamiltonian $\SU(2)$-action on $\CC^2$, $\rho$ gives a loop $\psi : S^1 \to \Ham(\CC^2)$, generated by the Hamiltonian $H_\theta = \frac{1}{2}(\lvert z \rvert^2 - \lvert w \rvert^2)$.  We can then suspend $\psi$ to give a symplectomorphism $\Psi$ of $\CC^2 \times T^*S^1$, defined by
\[
\big((z, w), (t, \theta)\big) \mapsto \big(\psi_\theta(z, w), (t-H_\theta(\psi_\theta(z, w)), \theta)\big) = \Big((e^{i\theta}z, e^{-i\theta}w), \Big(t - \frac{1}{2}(\lvert z \rvert^2 - \lvert w \rvert^2), \theta\Big)\Big).
\]
This commutes with the symplectic reduction operation, which we perform at the level $\lvert z \rvert^2 + \lvert w \rvert^2 = 2$, and induces a symplectomorphism $\overline{\Psi}$ of $X$.  Note that $L$ corresponds to $\lvert z \rvert = \lvert w \vert = 1$ and $t = 0$, so is preserved by $\overline{\Psi}$.  The generators $\gamma_1$ and $\gamma_2$ of $H_1(L)$ are given respectively by rotating $z/w$ around $\U(1)$ whilst $\theta$ is constant, and by rotating $\theta$ whilst keeping $z/w$ constant.  Using this description, we see that $\overline{\Psi}$ fixes $\gamma_1$ and sends $\gamma_2$ to $2\gamma_1 + \gamma_2$, so acts precisely as $g$.  We're therefore done if we can show that $\overline{\Psi}$ is Hamiltonian isotopic to $\id_X$.

To this end, first note that $\overline{\Psi}$ is symplectically isotopic to the $\id_X$, as follows.  The group $\SU(2)$ is simply connected, so we can pick a homotopy $\rho^s$, for $s \in [0, 1]$, from $\rho^0 = \rho$ to the constant loop $\rho^1$ at the identity matrix.  Each loop $\rho^s$ is Hamiltonian, generated by some $H^s_\theta$ arising from the $\SU(2)$ moment map, and we can suspend it to a symplectomorphism $\Psi^s$ of $\CC^2 \times T^*S^1$, given by
\[
\Psi^s : \big((z, w), (t, \theta)\big) \mapsto \big(\psi^s_\theta(z, w), (t-H^s_\theta(\psi^s_\theta(z, w)), \theta)\big).
\]
Again this descends to a symplectomorphism $\overline{\Psi}^s$ of $X$, since each $\psi_\theta^s$ acts as an element of $\SU(2)$ and thus preserves $\lvert z \rvert^2 + \lvert w \rvert^2$ and commutes with the diagonal $\U(1)$-action on $\CC^2$.

To upgrade this to a Hamiltonian isotopy of $X$, we want to argue that the path $\overline{\Psi}^s$ has zero flux.  We have to be a little careful since $X$ is non-compact, and our symplectomorphisms are not compactly supported, but we can in fact work around this, as follows.  Each $\overline{\Psi}^s$ is equivariant with respect to translation in the $t$-direction, so descends to a symplectomorphism $\overline{\Psi}_\ZZ^s$ of $X/\ZZ$, where $1 \in \ZZ$ acts by $t \mapsto t+1$.  This quotient is compact, and if we can show that the flux is zero here then the path $\overline{\Psi}_\ZZ^s$ is Hamiltonian (see \cite[Theorem 10.2.5]{McDuffSalamon}).  We can then lift a generating Hamiltonian for $\overline{\Psi}_\ZZ^s$ from $X/ \ZZ$ to $X$ and see that $\overline{\Psi}^s$ itself is Hamiltonian, completing the proof.

It remains then to compute the flux of $\overline{\Psi}_\ZZ^s$, acting on $X/\ZZ = \CC\PP^1 \times T^2$.  The loop in the $t$-direction of $T^2$ is preserved setwise by the path $\overline{\Psi}_\ZZ^s$, so trivially the flux through it is zero.  We now just need to consider the loop in the $\theta$-direction.  This loop lifts to $X$ and then to $\CC^2 \times T^*S^1$, so we can compute the flux through it in the latter.  This amounts to finding the area of a specific cylinder connecting $\big((e^{i\theta}, e^{-i\theta}), (0, \theta)\big)$ to $\big((1, 1), (0, \theta)\big)$, where $\theta$ is now also the $S^1$ coordinate on the cylinder.  But since $\CC^2 \times T^*S^1$ is aspherical, the discussion in \cref{rmkFlux} means that we can actually use \emph{any} such cylinder.  There is an obvious one, namely the constant cylinder from $\big((1, 1), (0, \theta)\big)$ to itself, boundary-connected-summed with the unit disc in the first $\CC$ factor and its complex conjugate in the second.  The constant cylinder has area zero, whilst the areas of the two discs cancel out.  We conclude that the path $\overline{\Psi}_\ZZ^s$, and hence $\overline{\Psi}^s$, is Hamiltonian, so we're done.


\section{Higher dimensions}
\label{secHigher}

In this final section we return to the general setting of a monotone Lagrangian $n$-torus.


\subsection{Proof of \cref{Theorem6}}

Suppose that $n=3$ and that $\HLM$ is finite.  We can thus view $\HLM$ as a finite subgroup of $\GL(3, \ZZ)$, up to conjugation.  Our goal is to show that $\HLM$ is abstractly isomorphic to a subgroup of $S_4$, $S_3 \times S_2$, or $S_2 \times S_2 \times S_2$.

\begin{lemma}
\label{lemMinusId}
Suppose that $-I \in \HLM$. Then $\HLM$ is isomorphic to a subgroup of $S_2^3$.
\end{lemma}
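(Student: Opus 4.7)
My plan is to combine the fixed-point information from \cref{exMinusId} with a congruence-subgroup argument to force $\HLM$ to be an elementary abelian $2$-group of rank $\leq 3$.

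\textbf{Step 1: Congruence condition.} By \cref{exMinusId}, every point of $\{\pm 1\}^3 \subset H^1(L; \CC^*)$ is a critical point of $W$, and \cref{lemLocalSystemAction} then tells us that every $g \in \HLM$ fixes all $8$ of these points. Write the matrix of $g$ as $A \in \GL(3, \ZZ)$; the induced map $\psi_g$ on $H^1(L; \CC^*)$ is the monomial map with exponent matrix $A^{-1}$. Requiring $\psi_g$ to fix all of $\{\pm 1\}^3$ amounts to requiring each entry of $A^{-1}$ to have the same parity as the corresponding entry of $I$, i.e.\ $A \equiv I \pmod 2$. So $\HLM$ is contained in the principal congruence subgroup $\Gamma(2) = \ker(\GL(3, \ZZ) \to \GL(3, \ZZ/2))$.

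\textbf{Step 2: Exponent $2$.} I would now appeal to Minkowski's lemma: for $m \geq 3$ the reduction $\GL(n, \ZZ) \to \GL(n, \ZZ/m)$ is injective on torsion, so $\Gamma(4)$ is torsion-free. Since $\HLM$ is finite, the inclusion $\HLM \hookrightarrow \Gamma(2)$ descends to an injection $\HLM \hookrightarrow \Gamma(2)/\Gamma(4)$. The product formula
\[
(I + 2B)(I + 2B') = I + 2(B + B') + 4BB' \equiv I + 2(B+B') \pmod 4
\]
identifies $\Gamma(2)/\Gamma(4)$ with the additive group $(M_3(\ZZ/2), +)$, which is elementary abelian of exponent $2$. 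Hence every element of $\HLM$ is an involution and $\HLM$ is abelian.

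\textbf{Step 3: Bounding the rank by simultaneous diagonalisation.} Since $\HLM$ is a commuting family of involutions acting on $\CC^3$, it is simultaneously diagonalisable, giving a decomposition
\[
\CC^3 = W_{\chi_1} \oplus \dots \oplus W_{\chi_m}
\]
into common eigenspaces indexed by distinct characters $\chi_i \colon \HLM \to \{\pm 1\}$. Since the $W_{\chi_i}$ are nonzero and their dimensions sum to $3$, we have $m \leq 3$. An element $g \in \HLM$ is determined by its action on each summand, hence by the tuple $(\chi_1(g), \dots, \chi_m(g))$, so the resulting homomorphism $\HLM \to (\ZZ/2)^m$ is injective. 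Composing with the inclusion $(\ZZ/2)^m \hookrightarrow (\ZZ/2)^3 = S_2^3$ completes the proof.

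\textbf{Main obstacle.} The only non-formal input is Minkowski's lemma, which is standard; everything else is bookkeeping. The one place to be careful is Step 1, where one must get the relationship between $A$ and the monomial exponents of $\psi_g$ right (a consequence of the action of diffeomorphisms on characters of $H_1(L)$), but this is essentially spelled out in \cref{corEigenspaces} and the surrounding discussion.
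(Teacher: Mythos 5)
Your proof is correct, and while it rests on the same geometric input as the paper --- namely that $-I \in \HLM$ forces every $g \in \HLM$ to fix all of $\{\pm 1\}^3$ (\cref{exMinusId} combined with \cref{lemLocalSystemAction}, i.e.\ \cref{corEigenspaces} applied with $g_1 = -I$) --- the way you exploit that input is genuinely different at both stages. To show every element has order $2$, the paper rules out orders $3$ and $4$ by testing the explicit $\GL(3,\ZZ)$-conjugacy representatives from \cite{AuslanderCook} against the fixed-point condition, whereas you convert the fixed-point condition into the congruence $A \equiv I \pmod 2$ and then invoke Minkowski's lemma (torsion-freeness of $\Gamma(4)$) together with the identification of $\Gamma(2)/\Gamma(4)$ with a subgroup of $(M_3(\ZZ/2),+)$; your parity computation in Step 1 is right, and as you note it is insensitive to whether the exponent matrix is $A$, $A^{-1}$, or a transpose, since all reduce to $I$ mod $2$ together. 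To bound the rank, the paper consults the list of $32$ crystallographic point groups to exclude $S_2^4$, whereas you simultaneously diagonalise the commuting involutions and use faithfulness of the action on $H^1(L;\CC) \cong \CC^3$ to embed $\HLM$ into $(\ZZ/2)^m$ with $m \leq 3$. What your route buys is self-containedness (no tables of conjugacy classes or point groups) and, more significantly, immediate generalisation: the identical argument shows that for any $n$, a finite $\HLM$ containing $-I$ embeds in $S_2^n$. What the paper's route buys is brevity, given that it needs the crystallographic classification anyway for the remaining ($-I \notin \HLM$) half of \cref{thmn3}.
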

\begin{proof}
First we claim that every element of $\HLM$ has order $2$.  The possible finite orders of elements in $\GL(3, \ZZ)$ are $1$, $2$, $3$, $4$, and $6$, so it suffices to rule out the existence of an element of order $3$ or $4$ in $\HLM$ (if there were an element of order $6$ then its square would be an element of order $3$).  Any such element is conjugate in $\GL(3, \ZZ)$ to
\[
\begin{pmatrix} 1 & 0 & \delta \\ 0 & 0 & -1 \\ 0 & 1 & -1 \end{pmatrix} \quad \text{or} \quad \begin{pmatrix} \pm 1 & 0 & \delta \\ 0 & 0 & -1 \\ 0 & 1 & 0 \end{pmatrix}
\]
respectively, for some $\delta \in \{0, 1\}$; see \cite[Sections 2.2 and 2.3]{AuslanderCook} for instance.  By \cref{exMinusId} and \cref{corEigenspaces}, the actions of these elements on $H^1(L; \CC^*)$ must fix every point in $\{\pm 1\}^3$.  This is easily disconfirmed, proving our claim.

So $\HLM$ is a finite group in which every element has every order $2$.  This means it is abelian and isomorphic to $S_2^k$ for some $k$.  But now we can consult the list of $32$ crystallographic point groups (finite subgroups of $\GL(3, \ZZ)$ up to conjugacy in $\GL(3, \QQ)$) and see that none of them is isomorphic to $S_2^4$.  Therefore $k \leq 3$.
\end{proof}

To complete the proof of \cref{Theorem6}, we break into two cases depending on whether $-I$ is in $\HLM$ or not.  In the first case we're done by \cref{lemMinusId}, whilst in the second case one can directly check the list of crystallographic point groups and see that all of those not containing $-I$ are isomorphic to subgroups of $S_4$ or $S_3 \times S_2$.


\subsection{Higher dimensions}
\label{sscHigherDim}

For general $n$ one can try to use similar methods to constrain the isomorphism class of $\HLM$, but the problem gets much harder because the number of conjugacy classes of finite subgroups of $\GL(n, \ZZ)$ grows rapidly: there are $85{,}308$ for $n=6$ \cite{OEIS}.  Using the computer algebra system \emph{GAP} \cite{GAP4} and the \emph{RatProbAlgTori} package \cite{RatProbAlgTori} one can enumerate these conjugacy classes of subgroups for $n = 4$, $5$, or $6$, and rule out many of them using \cref{corEigenspaces} with $k=1$ or $2$.  Our code for this is available at \url{https://github.com/MarcinAugustynowicz/HLM}.  Combining this with our earlier results for $n=2$ and $3$, and for toric fibres, we obtain the following.

\begin{proposition}
Assume $\HLM$ is finite.  If $n \leq 6$ or $L$ is a monotone toric fibre then either:
\begin{enumerate}
\item\label{conjitm1b} $\HLM$ is isomorphic to a subgroup of $\GL(n-1, \ZZ)$, or
\item\label{conjitm2b} There exist integers $n_1, \dots, n_k \geq 2$ with $\sum (n_j - 1) = n$ such that $\HLM$ is isomorphic to a subgroup of $S_{n_1} \times \dots \times S_{n_k}$.\hfill$\qed$
\end{enumerate}
\end{proposition}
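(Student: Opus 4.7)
The plan is to treat the toric case and the small-$n$ case separately. The toric case is essentially immediate from what has been done: by \cref{ToricThm} together with \cref{rmkConjectureToric}, when $L$ is a monotone toric fibre, $\HLM \cong S_{N_1} \times \dots \times S_{N_k}$ with integers $N_j \geq 2$ satisfying $\sum_j (N_j-1) \leq n$. Setting $d = n - \sum_j(N_j-1) \geq 0$ and embedding $\HLM$ as the first $k$ factors of $S_{N_1} \times \dots \times S_{N_k} \times S_2 \times \dots \times S_2$ (with $d$ extra copies of $S_2$) puts us into the form required by \ref{conjitm2b}.

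For $n \leq 3$ I would use the existing classifications. The case $n=1$ reduces to $\HLM \subseteq \{\pm 1\} \cong S_2$, so \ref{conjitm2b} holds. For $n=2$, I would go through each finite group appearing in \cref{ClassificationThm} and check by inspection: the finite groups arising in parts \ref{rk0} and \ref{rk1} are trivial or $\ZZ/2$, each sitting inside $\GL(1, \ZZ) = \{\pm 1\}$ (giving \ref{conjitm1b}); in part \ref{rk2} the non-cyclic survivors $\gp{2}_\rmf \cong S_2 \times S_2$, $\gp{3} \cong \ZZ/3$, and $\gp{3}_\rmf \cong S_3$ all fit as subgroups of $S_2 \times S_2$ or $S_3$, giving \ref{conjitm2b}. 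For $n=3$, \cref{thmn3} already embeds $\HLM$ in one of $S_4$, $S_3 \times S_2$, or $S_2 \times S_2 \times S_2$, each visibly of the form required by \ref{conjitm2b}.

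The remaining cases $n = 4, 5, 6$ are handled by computer enumeration. Using CaratInterface I would iterate through the conjugacy class representatives of finite subgroups of $\GL(n, \ZZ)$ and, for each candidate $G$, apply the test of \cref{corEigenspaces}: search for small tuples $(g_1, \dots, g_\ell) \in G^\ell$ (with $\ell \in \{1, 2\}$ typically sufficing) such that $\bigcap_j \ker(g_j - I) = 0$ over $\QQ$, compute the finite set of their simultaneous fixed points on $H^1(L;\CC^*)$, and check that every element of $G$ fixes this set pointwise; groups failing this test are discarded. For the survivors one verifies abstractly that $G$ fits into \ref{conjitm1b} or \ref{conjitm2b}, the latter by comparing $G$ with the short finite list of products $S_{n_1} \times \dots \times S_{n_k}$ with $\sum(n_j-1) = n$. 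The main obstacle will be the size of the enumeration at $n=6$, with over eighty thousand conjugacy classes, together with implementing the $\QQ$-rational fixed-point computation robustly in the presence of roots of unity; the saving grace is that \cref{corEigenspaces} is restrictive enough that the overwhelming majority of candidates should die already at $\ell \leq 2$, leaving a manageable residual list to be classified by hand or by a light GAP script.
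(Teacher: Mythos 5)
Your proposal is correct and follows essentially the same route as the paper: the toric case via \cref{ToricThm} and \cref{rmkConjectureToric}, the cases $n\leq 3$ via \cref{ClassificationThm} and \cref{thmn3}, and $n=4,5,6$ by a CaratInterface enumeration filtered using \cref{corEigenspaces} with one or two elements. (The only nitpick is that $\gp{3}\cong\ZZ/3$ is cyclic, but your conclusion that it embeds in $S_3$ is what matters.)
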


This leads us to make \cref{conjHighDim}: that this result continues to hold for all monotone tori $L$.

\begin{remark}
One might hope, at first sight, that case \ref{conjitm1b} is unnecessary, since it is not needed for $n \leq 3$ or for toric $L$.  However, further thought makes this seem unlikely.  Algebraically, if all elements of $\HLM$ have a common fixed vector then \eqref{eqKerIntersect} never holds, so \cref{corEigenspaces} gives us no information.  Geometrically, meanwhile, for any finite subgroup $\Gamma \subset \GL(n-1, \ZZ)$ one can imagine a torus having superpotential of the form
\[
W(x_1, \dots, x_n) = x_1 + \widetilde{W}(x_2, \dots, x_n),
\]
where $\widetilde{W}$ is $\Gamma$-invariant.  Then it seems difficult to rule out the possibility that $\HLM = 1 \times \Gamma$ since $W$ is invariant under this group and has no critical points, so our Floer-theoretic techniques all fail.
\end{remark}

\bibliographystyle{siam} 
\bibliography{HLMbiblio} 

\end{document}